\DeclareMathOperator*{\argmin}{argmin}
\newtheorem{remark}{Remark}
\newtheorem{prop}{Proposition}
\newtheorem{thm}{Theorem}
\newtheorem{lemma}{Lemma}
\newtheorem{assume}{Assumption}
\newtheorem{definition}{Definition}
\newtheorem{cor}{Corollary}
\numberwithin{remark}{section}
\numberwithin{prop}{section}
\numberwithin{thm}{section}
\numberwithin{lemma}{section}
\numberwithin{definition}{section}
\numberwithin{equation}{section}
\numberwithin{assume}{section}
\numberwithin{example}{section}
\numberwithin{cor}{section}
\def\W{\mathcal{W}}
\def\E{\mathbb{E}}
\def\P{\mathbb{P}}
\def\cP{\mathcal{P}}
\def\R{\mathbb{R}}
\def\k{\kappa}
\def\a{\alpha}
\def\e{\epsilon}
\def\pa{\partial}
\def\cH{\mathcal{H}}
\def\cL{\mathcal{L}}
\def\ld{\lambda}
\def\ol{\overline}
\def\e{\epsilon}
\def\wt{\widetilde}
\title[]{Solvability of Infinite horizon McKean-Vlasov FBSDEs in Mean Field Control Problems and Games}
\author{Erhan Bayraktar}\thanks{E. Bayraktar is partially supported by the National Science Foundation under grant DMS-2106556 and by
the Susan M. Smith chair.}
\address{Department of Mathematics, University of Michigan}
\email{erhan@umich.edu}
\author{Xin Zhang} 
\address{Department of Mathematics, University of Michigan}
\email{zxmars@umich.edu}
\date{\today}
\keywords{McKean-Vlasov FBSDE, infinite horizon, mean field game, mean field type control.}
\subjclass[2020]{Primary: 91A16, 49N80. }
\begin{document}

\begin{abstract}
In this paper, we show existence and uniqueness of solutions of the infinite horizon McKean-Vlasov FBSDEs using two different methods, which lead to two different sets of assumptions. We use these results to solve the infinite horizon mean field type control problems and mean field games.
\end{abstract}

\maketitle

\section{Introduction}
Motivated by infinite horizon mean field control and mean field game, in this paper we establish existence and uniqueness of solutions to an infinite horizon McKean-Vlasov FBSDE 
\begin{align}\label{eq:gMFB}
\begin{cases}
dX_t=B(t,X_t,Y_t, \cL(X_t,Y_t))\, dt + \sigma \,dW_t, \\
 dY_t=-F(t,X_t,Y_t, \cL(X_t,Y_t)) \, dt+Z_t \, dW_t, \quad \forall {t \geq 0}, \\
 X_0=\xi,
\end{cases}
\end{align}
where $(W_t)$ is a Brownian motion on a stochastic basis $(\Omega, \mathcal{F}, \mathbb{F}, \P)$, $B,F: \Omega \times \R_+ \times \R^2 \times \cP_2(\R^2) \to \R$ are two progressively measurable functions, and $\xi$ is an $\mathcal{F}_0$-measurable square integrable random variable. Compared with finite horizon FBSDEs, in \eqref{eq:gMFB} no terminal conditions are required. Instead, for the well-posedness we specify the solution space which determines asymptotic behavior of the processes. Due to our interest in infinite horizon discounted problems, we look for solutions $(X_t,Y_t,Z_t)$ to \eqref{eq:gMFB} in $L^2_K(0,\infty, \R^3)$, where $K \in \R$ and $L^2_K(0,\infty, \R^3)$ is the Hilbert space of all $\R^2$-valued adapted stochastic process $(v_t)$ such that 
\begin{align*}
\E \left[ \int_0^{\infty} e^{-Kt} |v_t|^2 \, dt  \right]< +\infty. 
\end{align*}
Using methods of \cite{MR1730617} and \cite{MR1701517,MR2462676}, we show that there exists a unique solution $(X_t,Y_t) \in L^2_K(0,\infty,\R^2)$ to \eqref{eq:gMFB} under two sets of assumptions. As applications, we solve the corresponding infinite horizon McKean-Vlasov FBSDEs of mean field type control and mean field game problems.

The study of mean field games was initiated independently by Lasry, Lions (see  \cite{MR2269875}, \cite{LASRY2006679}, \cite{MR2295621}) and Caines, Huang, Malham\'{e} (see  \cite{4303232}, \cite{MR2346927}), which is an analysis of 
limit models for symmetric weakly interacting $N+1$-player differential games.
Since then, mean field game has been an active research area. We refer the readers to \cite{2020arXiv201204845B,2019arXiv191206701B,MR3860894,MR4013871} for the study of finite state mean field games, to \cite{MR4127851,MR4083905,MR3981375,MR4046528} for uniqueness of mean field game solutions, and to  \cite{MR3752669,MR3753660} for a nice survey. Also, inspired by the surge of interest in optimal control, several works have been published for the analysis of mean field type control, which includes the distribution of controlled states in coefficients; see e.g. \cite{MR2784835,MR3134900,MR3045029}.

The investigation of BSDEs was pioneered by Pardoux and Peng \cite{10.1007/BFb0007334,MR1037747} in the early 90s, which is now a standard tool in stochastic optimization problems (see e.g. \cite{MR3629171,MR2533355}). Applying Pontryagin's maximum principle, both mean field game and mean field type control can be studied using McKean-Vlasov FBSDEs; see e.g. \cite{MR3091726,MR3045029,doi:10.1142/S0219493721500362}. For analysis of FBSDE, we refer to a common reference \cite{MR1704232}. 

The linear quadratic model for infinite horizon mean field game and mean field type control have been studied in \cite{cardaliaguet2019,4303232,MR3377926} using HJB-FP equations and in \cite{MR3938473} using martingale method respectively. \cite{MR3377926} provided the exact stationary solution to linear quadratic infinite horizon mean field games. We also refer to \cite{cardaliaguet2019,2021arXiv210109965C} for the PDE analysis of long time behavior of mean field game. For the best of our knowledge, this paper is the first to investigate infinite horizon mean field game and mean field type control problems using FBSDE techniques. 

The result of the paper is organized as follows. In Section~\ref{sec2}, we prove the existence and uniqueness of solutions to \eqref{eq:gMFB} under two sets of assumptions; see Theorems~\ref{thm1} and \ref{thm2}. In Section~\ref{sec3}, as an application, we solve the infinite horizon mean field type control problems and games. In Section~\ref{sec4}, we analyze the particular case of linear quadratic models. 

In this rest of this section we will list some frequently used notation.

\noindent {\bf{Notation.}} Denote by $\cP_2(\R^n)$ the space of random variables in $\R^n$ with finite second moment endowed with the Wasserstein 2-metric $\W_2$. For any $\R^n$, define $\delta_0$ to be the Dirac measure at the origin, and for any random variable $X$, denote by $\cL(X)$ the law of $X$.

\section{Solutions to infinite horizon McKean-Vlasov FBSDEs}\label{sec2}

In this section, we establish the existence and uniqueness of the infinite horizon McKean-Vlasov FBSDE \eqref{eq:gMFB} under two sets of assumptions. For any $(v_t) \in L^2_K(0,\infty, \R^n)$, we define the exponentially weighted $L^2$ norm 
\begin{align*}
||v||^2_K:=\E \left[ \int_0^{\infty} e^{-Kt} |v_t|^2 \, dt \right].
\end{align*}
For simplicity, we only solve \eqref{eq:gMFB} for one dimensional $(X_t,Y_t,Z_t)$, but our results can be easily generalized to multidimensional case.
\subsection{Continuity method}
As in \cite{MR1730617}, we study the following family of infinite horizon FBSDEs parametrized by $\ld \in [0,1]$,
\begin{align}\label{eq:FFB}
\begin{cases}
&d X^{\ld}_t= (\ld B(t,X^{\ld}_t, Y^{\ld}_t, \cL(X^{\ld}_t,Y^{\ld}_t))-\k (1-\ld)Y^{\ld}_t + \phi(t) ) \, dt +\sigma \,dW_t, \\
&d Y^{\ld}_t=-(\ld F(t, X^{\ld}_t, Y^{\ld}_t, \cL(X^{\ld}_t,Y^{\ld}_t))+\k (1-\ld) X^{\ld}_t+\psi(t) ) \, dt + Z^{\ld}_t \, dW_t, \\
&X^{\ld}_0=\xi, \quad (X^{\ld}_t,Y^{\ld}_t,Z^{\ld}_t) \in L_{K}^2(0,\infty, \R^3),
\end{cases}
\end{align}
where $\phi, \psi$ are two arbitrary processes in $L^2_{K}(0,\infty, \R)$ and $\k$ is a positive constant to be determined below in Assumption~\ref{assume1}.
Note that when $\ld=1$, $\phi\equiv 0$, $\psi \equiv 0$, \eqref{eq:FFB} becomes \eqref{eq:gMFB}, and when $\ld=0$, \eqref{eq:FFB} becomes 
\begin{align}\label{eq:lambda0}
\begin{cases}
&dX^0_t= (-\k Y_t^0+\phi(t)) \, dt +\sigma\, dW_t, \\
&dY^0_t=-(\k X_t^0+\psi(t)) \, dt+Z_t^0 \, dW_t, \\
&X^0_0=\xi.
\end{cases}
\end{align}

\begin{lemma}\label{sec3:lem1}
Assume that $0 <K <2\k$. For any $\phi, \psi \in L^2_{K}(0,\infty, \R)$, there exists a unique solution $(X^0,Y^0,Z^0) \in L^2_{K}(0,\infty, \R^3)$ to \eqref{eq:lambda0}. 
\end{lemma}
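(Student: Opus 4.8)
The system \eqref{eq:lambda0} is linear with constant coefficients, and the device I would use is the change of variables $U_t:=X^0_t+Y^0_t$, $V_t:=X^0_t-Y^0_t$. Writing $\zeta^U,\zeta^V$ for the $dW$-coefficients of $U,V$, the pair \eqref{eq:lambda0} is equivalent to a \emph{dissipative} forward SDE $dU_t=(-\k U_t+\phi_t-\psi_t)\,dt+\zeta^U_t\,dW_t$ together with an \emph{anti-dissipative} equation $dV_t=(\k V_t+\phi_t+\psi_t)\,dt+\zeta^V_t\,dW_t$, coupled only through $U_0+V_0=2\xi$ (from $X^0_0=\xi$) and $\zeta^U_t+\zeta^V_t=2\sigma$, with $(U,V)\in L^2_K$ sought. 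The hypothesis $0<K<2\k$ is exactly what makes the two exponential weights compatible: $K>0$ controls the already stable $U$, while $2\k-K>0$ is needed to control the explosive $V$.

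For uniqueness, let $(\Delta X,\Delta Y,\Delta Z)$ be the difference of two $L^2_K$ solutions, so $\Delta X_0=0$, $d\Delta X_t=-\k\Delta Y_t\,dt$, $d\Delta Y_t=-\k\Delta X_t\,dt+\Delta Z_t\,dW_t$. With $\Delta V:=\Delta X-\Delta Y$ one gets $d(e^{-\k t}\Delta V_t)=-e^{-\k t}\Delta Z_t\,dW_t$; since $\Delta Z\in L^2_K$ and $e^{-2\k t}\le e^{-Kt}$, the usual localization together with It\^o's formula yields $e^{-2\k T}\E|\Delta V_T|^2=|\Delta V_0|^2+\E\int_0^T e^{-2\k t}|\Delta Z_t|^2\,dt\uparrow L\in[0,\infty)$. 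If $L>0$ then $e^{-Kt}\E|\Delta V_t|^2$ would blow up (as $2\k-K>0$), contradicting $\Delta V\in L^2_K$; hence $L=0$, so $\Delta V_0=0$ and $\Delta Z\equiv0$, and then $\Delta V\equiv0$. Finally $d\Delta X_t=-\k\Delta Y_t\,dt$, $d\Delta Y_t=-\k\Delta X_t\,dt$ with $\Delta X_0=\Delta Y_0=0$ (using $\Delta Y_0=-\Delta V_0=0$), a deterministic linear ODE, so $\Delta X=\Delta Y\equiv0$.

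For existence, put $g:=\phi+\psi$. A Cauchy--Schwarz estimate with the split $e^{-\k s}=e^{-(\k-K/2)s}e^{-(K/2)s}$ and $2\k-K>0$ gives $\E[(\int_0^\infty e^{-\k s}g_s\,ds)^2]\le(2\k-K)^{-1}\|g\|_K^2<\infty$, so $M_t:=\E[\int_0^\infty e^{-\k s}g_s\,ds\mid\mathcal{F}_t]=M_0+\int_0^t m_s\,dW_s$ is a square-integrable martingale and
\[
V_t:=-e^{\k t}\Big(M_t-\int_0^t e^{-\k s}g_s\,ds\Big)=-\,\E\Big[\int_t^\infty e^{-\k(s-t)}g_s\,ds\,\Big|\,\mathcal{F}_t\Big]
\]
solves $dV_t=(\k V_t+g_t)\,dt+\zeta^V_t\,dW_t$ with $\zeta^V_t=-e^{\k t}m_t$. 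The delicate step is $V\in L^2_K$: the pointwise bound $e^{-Kt}\E|V_t|^2\le(2\k-K)^{-1}\E\int_t^\infty e^{-Ks}|g_s|^2\,ds$ is not integrable in $t$ (integrating it costs a factor $t$), so instead I would note that $g\mapsto\big(t\mapsto\int_0^\infty e^{-(\k-K/2)u}g(t+u)\,du\big)$ is bounded on $L^2(0,\infty)$ with norm $\le(\k-K/2)^{-1}$ by Minkowski's integral inequality, applied $\omega$ by $\omega$ to $\widehat{g}_s:=e^{-Ks/2}g_s$; this gives $\|V\|_K^2\le(\k-K/2)^{-2}\|g\|_K^2<\infty$. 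For $\zeta^V\in L^2_K$, apply It\^o to $e^{-Kt}|V_t|^2$, take expectations with localization, and let $T\to\infty$ (the pointwise bound keeps $e^{-KT}\E|V_T|^2$ bounded), obtaining $\E\int_0^\infty e^{-Kt}|\zeta^V_t|^2\,dt<\infty$ — note that reading this off the formula $\zeta^V_t=-e^{\k t}m_t$ instead would require the unavailable bound $\E\int_0^\infty e^{(2\k-K)t}|m_t|^2\,dt<\infty$. Finally let $U$ solve the forward SDE $dU_t=(-\k U_t+\phi_t-\psi_t)\,dt+(2\sigma-\zeta^V_t)\,dW_t$ with $U_0:=2\xi-V_0\in L^2$ (explicit by variation of constants); an It\^o/Young estimate on $e^{-Kt}|U_t|^2$, using only $K+2\k>0$, gives $\|U\|_K^2\lesssim\E|U_0|^2+\|\phi-\psi\|_K^2+\|2\sigma-\zeta^V\|_K^2<\infty$. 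Then $X^0:=(U+V)/2$, $Y^0:=(U-V)/2$, $Z^0:=\sigma-\zeta^V$ all lie in $L^2_K$ (note $\sigma\in L^2_K$ since $K>0$), and a direct substitution shows $(X^0,Y^0,Z^0)$ solves \eqref{eq:lambda0} with $X^0_0=(U_0+V_0)/2=\xi$.

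I expect the only genuinely delicate point to be $V\in L^2_K$ with the right constant: an elementary Cauchy--Schwarz argument loses a factor $t$ and fails, and one has to identify the exponentially weighted ``Laplace-type'' operator above and invoke its $L^2$-boundedness — which is precisely where the compatibility $K<2\k$ between the discount rate and the decoupling rate is consumed. An alternative route meeting the same obstacle is finite-horizon approximation: solve \eqref{eq:lambda0} on $[0,T]$ with $Y^0_T=0$ (well posed, with bounded Riccati function $\tanh(\k(T-t))$), derive uniform-in-$T$ estimates (this is where the operator bound resurfaces), pass to $T\to\infty$, and identify the limit via the uniqueness argument.
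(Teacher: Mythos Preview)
Your proof is correct, and the core idea coincides with the paper's: both decouple \eqref{eq:lambda0} by a linear change of variables into an infinite-horizon backward equation plus a dissipative forward SDE. The paper sets $P_t:=Y^0_t-X^0_t$ (which is precisely your $-V_t$), invokes \cite[Theorem~4]{MR1730617} to solve the resulting BSDE $dP_t=(\k P_t-\phi_t-\psi_t)\,dt+(Q_t-\sigma)\,dW_t$ in $L^2_K$, and then runs the forward SDE for $X^0$ with constant diffusion $\sigma$; it defers uniqueness to the argument of Theorem~\ref{thm1}. Your route differs in execution rather than in spirit: you use the rotated pair $(U,V)=(X^0+Y^0,X^0-Y^0)$, you construct the backward solution $V$ explicitly via the martingale representation (so your argument is self-contained and does not quote an external BSDE theorem), and you prove the key $L^2_K$-bound for $V$ by identifying the exponentially weighted convolution operator and bounding it via Minkowski's integral inequality --- this is exactly the content hidden inside the cited theorem, and your remark that a naive Cauchy--Schwarz loses a factor $t$ is well taken. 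Your uniqueness argument, reducing to the scalar process $e^{-\k t}\Delta V_t$ and exploiting that its squared $L^2$-norm is nondecreasing, is also direct and independent of the paper's later monotonicity machinery. The trade-off: the paper's version is shorter by outsourcing the BSDE step, while yours makes transparent where the hypothesis $K<2\k$ is actually consumed.
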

\begin{proof}
The argument is almost the same as \cite[Lemma 2]{MR1730617}, and we repeat it here for readers' convenience. Let us consider the following infinite horizon BSDE,
\begin{align*}
d P_t = -(- \k P_t +\phi(t)+\psi(t) ) \, dt +(Q_t-\sigma) \, dW_t, \quad \forall t \geq 0. 
\end{align*}
Applying \cite[Theorem 4]{MR1730617} with the fact that $K - 2\k<0$, the above equation has a unique solution $(P,Q) \in L^2_{K}(0,\infty, \R)$. Then we consider the following SDE, 
\begin{align*}
d X_t = ( - \k X_t -\k P_t + \phi(t)) \, dt + \sigma \, dW_t,  \quad X_0=\xi. 
\end{align*}
Since $P, Q, \phi \in L^2_K(0,\infty,\R)$, it can be easily seen that the above equation has a unique solution over arbitrary finite horizon $[0,T]$. Therefore, it remains to show that $X \in L^2_K(0,\infty, \R)$. Applying It\^{o}'s formula to $e^{-Kt} |X_t|^2$, it follows that 
\begin{align*}
& \E[e^{-KT}|X_T|^2 ]-\E[\xi^2] \\
&= \E \left[ \int_0^T (-K-2\k)e^{-Kt} |X_t|^2+2 e^{-Kt}X_t \cdot (-\k P_t +\phi(t)) ) \, dt \right]  + \E \left[ \int_0^T e^{-Kt} \sigma^2 \, dt \right].
\end{align*}
Choose a positive $\e$  such that $-K-2\k +\e <0$. Using the inequality 
\begin{align*}
2 e^{-Kt} X_t \cdot (-\k P_t +\phi(t)) \leq \e e^{-Kt}|X_t|^2+ \frac{e^{-Kt}}{\e}(-\k P_t+\phi(t))^2,
\end{align*}
we easily obtain that 
\begin{align*}
\E[e^{-KT}|X_T|^2 ]-\E[\xi^2] \leq \E \left[\int_0^T (-K- 2 \k +\e) e^{-Kt}|X_t|^2 \, dt \right]+C_{\e},
\end{align*}
where $C_{\e}$ is a constant that only depends on $\e$ and $\Vert P \Vert^2_K,\Vert Q \Vert^2_K,\Vert \phi \Vert^2_K$. Clearly, it is equivalent to 
\begin{align*}
\E[e^{-KT}|X_T|^2 ]+(2 \k +K-\e)\E \left[\int_0^T  e^{-Kt}|X_t|^2 \, dt \right] \leq \E[\xi^2]+C_{\e}.
\end{align*}
Letting $T \to \infty$ in the above inequality and noting that $2\k+K-\e>0$, we conclude that $X \in L^2_K(0,\infty, \R)$. It can be easily verified that $(X^0,Y^0,Z^0)=(X,X+P,Q) \in L^2_K (0,\infty, \R^3)$ is a solution to \eqref{eq:lambda0}. The uniqueness can be proved in a similar way as in Theorem~\ref{thm1}.

\end{proof}

\begin{assume}\label{assume1}
(i) There exists a positive constant $l$ such that for any $x,x',y,y' \in \R$, $ m, m' \in \mathcal{P}_2(\R^2)$
\begin{align*}
  | B(t,x,y,m)-& B(t,x',y',m')|+|F(t,x,y,m)-F(t,x',y',m')| \\
 & \leq l(|x-x'|+|y-y'|+\W_2(m,m') ) \quad \text{a.s.}
\end{align*}
(ii) There exist constants $0<K<2\k $ such that for any $t  \geq 0$ and any square integrable random variables $X, Y, X', Y'$
\begin{align*}
\E \left[{-K} \hat{X} \hat{Y}- \hat{X} (F(t,U)-F(t,U') )+\hat{Y} (B(t,U)-B(t,U') )\right] \leq -\k \E \left[ \hat{X}^2+ \hat{Y}^2 \right],
\end{align*}
where $\hat{X}=X-X', \hat{Y}=Y-Y'$ and $U=(X, Y , \cL(X,Y)), U'=(X', Y', \cL(X',Y')).$
\end{assume}

\begin{prop}\label{sec3:prop1}
Suppose $\ld_0 \in [0,1)$ and for any $\mathcal{F}_0$-measurable square integrable random variable $\xi$, $\phi, \psi \in L_K^2(0,\infty, \R)$, \eqref{eq:FFB} has a unique solution $(X^{\ld_0}, Y^{\ld_0}, Z^{\ld_0})$ in $L_K^2(0,\infty, \R^3)$. Then under Assumption~\ref{assume1}, the FBSDE \eqref{eq:FFB} has a unique solution $(X^{\ld_0+\delta}, Y^{\ld_0+\delta}, Z^{\ld_0+\delta})$ in $L_K^2(0,\infty, \R^3)$ for any $\delta \leq \frac{2\k}{3\k+ 12l}$, $\phi, \psi \in L_K^2(0,\infty,\R^2)$
\end{prop}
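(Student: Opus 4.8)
The plan is to run the classical continuation argument from \cite{MR1730617}: freeze the previous parameter $\ld_0$, and solve \eqref{eq:FFB} for $\ld=\ld_0+\delta$ by a fixed-point (Picard) iteration that uses the assumed solvability at $\ld_0$ as a black box. Concretely, given an arbitrary $(x,y,z)\in L^2_K(0,\infty,\R^3)$ with the prescribed initial condition, I would define $(X,Y,Z)$ to be the unique $L^2_K$ solution (guaranteed by the hypothesis at $\ld_0$) of the FBSDE \eqref{eq:FFB} with parameter $\ld_0$, but with the driver augmented by the ``frozen'' increments
\begin{align*}
\phi_\delta(t)&:=\phi(t)+\delta\big(B(t,x_t,y_t,\cL(x_t,y_t))+\k y_t\big),\\
\psi_\delta(t)&:=\psi(t)+\delta\big(F(t,x_t,y_t,\cL(x_t,y_t))+\k x_t\big),
\end{align*}
which lie in $L^2_K$ by Assumption~\ref{assume1}(i) (Lipschitz growth forces $B(t,x_t,y_t,\cL(x_t,y_t))$ to be in $L^2_K$ whenever $(x_t,y_t)$ is, using $\W_2(\cL(x_t,y_t),\delta_0)^2\le \E|(x_t,y_t)|^2$ and the value at a fixed reference point). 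This defines a map $\Phi:(x,y,z)\mapsto(X,Y,Z)$ on $L^2_K(0,\infty,\R^3)$, and a fixed point of $\Phi$ is exactly a solution of \eqref{eq:FFB} at $\ld=\ld_0+\delta$.

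The core estimate is to show $\Phi$ is a contraction when $\delta\le \frac{2\k}{3\k+12l}$. Take two inputs $(x^1,y^1,z^1),(x^2,y^2,z^2)$ with outputs $(X^1,Y^1,Z^1),(X^2,Y^2,Z^2)$, write $\hat X=X^1-X^2$, etc., and $\hat x=x^1-x^2$, etc. The differenced system is a linear FBSDE at parameter $\ld_0$ driven by $\delta$ times the differences of the frozen terms, with zero initial condition. Applying It\^o's formula to $e^{-Kt}\hat X_t\hat Y_t$ over $[0,T]$ and letting $T\to\infty$ (the boundary terms vanish because all processes are in $L^2_K$, exactly as in Lemma~\ref{sec3:lem1}), the monotonicity Assumption~\ref{assume1}(ii) applied to the $\ld_0$-part of the driver produces a coercive term $-\k(1-\ld_0)\,\|\hat X\|_K^2$ wait—more precisely a term controlling $\|(\hat X,\hat Y)\|_K^2$ with constant proportional to $\k$, while the $\delta$-perturbation terms are bounded, via Assumption~\ref{assume1}(i) and Cauchy--Schwarz, by $C\delta\big(\|\hat x\|_K+\|\hat y\|_K\big)\cdot\big(\|\hat X\|_K+\|\hat Y\|_K\big)$ with $C$ linear in $\k$ and $l$. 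Absorbing one factor of $\|(\hat X,\hat Y)\|_K$ by Young's inequality yields $\|(\hat X,\hat Y)\|_K^2 \le \Theta(\delta)\,\|(\hat x,\hat y)\|_K^2$ with $\Theta(\delta)<1$ precisely for $\delta$ below the stated threshold. One then also needs to bound $\|\hat Z\|_K$: this comes by applying It\^o to $e^{-Kt}\hat Y_t^2$, which expresses $\|\hat Z\|_K^2$ in terms of the already-controlled $\|\hat X\|_K,\|\hat Y\|_K$ and the perturbation, so the contraction extends to the full $L^2_K(0,\infty,\R^3)$ norm (possibly after noting $Z$ enters no coefficient, so a contraction in $(X,Y)$ alone already pins down the fixed point and $Z$ is then determined).

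The main obstacle is getting the arithmetic of the contraction constant to land exactly at $\delta\le \frac{2\k}{3\k+12l}$: one must track carefully how the $\W_2$ term in Assumption~\ref{assume1}(i) contributes (it brings in another copy of $\|(\hat x,\hat y)\|_K$ since $\W_2(\cL(x^1_t,y^1_t),\cL(x^2_t,y^2_t))^2\le \E|\hat x_t|^2+\E|\hat y_t|^2$), how the extra $+\k y$ and $+\k x$ terms in $\phi_\delta,\psi_\delta$ add to the $l$-dependence, and how the choice of Young's-inequality weights converts the bilinear bound into the factor $3\k+12l$ in the denominator. The rest — measurability, the $L^2_K$ membership of the frozen coefficients, vanishing of boundary terms at infinity, and the passage from a contraction on $(X,Y)$ to uniqueness of the full triple — is routine given Lemma~\ref{sec3:lem1} and standard BSDE theory. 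Uniqueness at $\ld_0+\delta$ follows from the same differenced estimate applied with identical inputs, or is deferred to the a priori estimate in Theorem~\ref{thm1}.
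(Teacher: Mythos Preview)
Your proposal is correct and follows essentially the same route as the paper: define a Picard map using the assumed solvability at $\ld_0$ with the frozen $\delta$-increments folded into $\phi,\psi$, then show it is a contraction by applying It\^o's formula to $e^{-Kt}\hat X_t\hat Y_t$, invoking Assumption~\ref{assume1}(ii) on the $\ld_0$-part and Assumption~\ref{assume1}(i) plus Young's inequality on the $\delta$-perturbation. Two minor remarks: your $\psi_\delta$ should carry $-\k x_t$ rather than $+\k x_t$ (check the sign when expanding $\k(1-\ld_0-\delta)X$), and the paper runs the fixed point only on $(x,y)\in L^2_K(0,\infty,\R^2)$, never tracking $z$---as you yourself note at the end, $Z$ enters no coefficient, so the detour through $e^{-Kt}\hat Y_t^2$ is unnecessary.
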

\begin{proof}
For any pair $(x,y) \in L^2_K(0,\infty, \R^2)$ such that $x_0=\xi$, according to our hypothesis, there exists a unique solution $(X,Y,Z)$ to the following equation 
\begin{align*}
\begin{cases}
&dX_t=(\ld_0 B(t, X_t,Y_t, M_t)-\k(1-\ld_0)Y_t+\delta (B(t,x_t,y_t,m_t)+\k y_t)+\phi(t)) \, dt  +\sigma \, dW_t ,\\
&dY_t=-(\ld_0 F(t,X_t,Y_t,M_t)+\k(1-\ld_0)X_t+\delta (F(t,x_t,y_t,m_t)-\k x_t)+\psi(t))\, dt +Z_t \, dW_t, \\
&X_0=\xi,
\end{cases}
\end{align*}
where $m_t:=\cL(x_t,y_t)$ and $M_t:=\cL(X_t,Y_t)$.
We define a map $\Phi$ via
\begin{align*}
\Phi: (x,y) \mapsto (X,Y). 
\end{align*}
Then a fixed point of $\Phi$ is a solution to \eqref{eq:FFB} with parameter $\ld_0+\delta$. Let us prove that $\Phi$ is actually a contraction.

Take another $(x',y')$ and its image $(X',Y')$ under $\Phi$. Denote $u_t=(x_t,y_t,m_t), U_t=(X_t,Y_t,M_t)$, and $\hat{x}_t=x_t -x'_t, \hat{y}_t=y_t-y'_t$ and similarly $\hat{X}_t, \hat{Y}_t$. Since $\hat{X}, \hat{Y} \in L_K^2(0,\infty, \R)$, there exists an increasing sequence of $T_i$ such that $\lim\limits_{i \to \infty} T_i=\infty$ and 
\begin{align*}
\lim\limits_{i \to \infty} \E\left[ e^{-KT_i} \hat{X}_{T_i} \hat{Y}_{T_i} \right]=0.
\end{align*}
By It\^{o}'s formula, it can be easily seen that
\begin{align}\label{eq:ito}
\E\left[ e^{-KT_i} \hat{X}_{T_i} \hat{Y}_{T_i} \right]=& \ld_0\E\left[\int_0^{T_i} e^{-Kt} \left( {-K} \hat{X}_t \hat{Y}_t- \hat{X}_t (F(t,U_t)-F(t,U'_t) )+\hat{Y}_t (B(t,U_t)-B(t,U'_t) ) \right) dt  \right] \notag \\
&-\k(1-\ld_0)  \E \left[ \int_0^{T_i} e^{-Kt} \left( \hat{X}_t^2+ \hat{Y}_t^2 \right) dt\right] -(K-\ld_0{K}) \E \left[ \int_0^{T_i} e^{-Kt} \hat{X}_t \hat{Y}_t \, dt \right]  \notag \\
& + \k \delta \E\left[ \int_0^{T_i} e^{-Kt} \left( \hat{X_t} \hat{x}_t + \hat{Y_t} \hat{y_t}  \right)  dt \right] \notag \\
&+ \delta \E \left[\int_0^{T_i} e^{-Kt} \left( -\hat{X}_t (F(t,u_t)-F(t,u'_t))+\hat{Y}_t ( B(t,u_t)-B(t,u'_t)) \right) dt \right] . 
\end{align}
According to Assumption~\ref{assume1} (ii), it holds that 
\begin{align}\label{eq:mainassump}
\E \left[{-K} \hat{X}_t \hat{Y}_t- \hat{X}_t (F(t,U_t)-F(t,U'_t) )+\hat{Y}_t (B(t,U_t)-B(t,U'_t) )\right] \leq -\k \E \left[ \hat{X}_t^2+ \hat{Y}_t^2 \right].
\end{align}

Therefore by Assumption~\ref{assume1} (i) and the fact that $$\W_2( \cL(x_t,y_t), \cL(x'_t,y'_t)) \leq \sqrt{\E[|x_t-x'_t|^2 ]} + \sqrt{\E[|y_t-y'_t|^2 ]},$$ it can be easily deduced from \eqref{eq:ito}  
\begin{align*}
\E\left[ e^{-KT_i} \hat{X}_{T_i} \hat{Y}_{T_i} \right] \leq &-\left(\k-K/2-\frac{k\delta +4l \delta}{2} \right)\E \left[ \int_0^{T_i} e^{-Kt} \left( \hat{X}_t^2+ \hat{Y}_t^2 \right) dt\right]\\
&+ \frac{\k \delta +4 l \delta }{2 } \E \left[\int_0^{T_i} e^{-Kt}\left( \hat{x}_t^2+ \hat{y}_t^2  \right) dt \right].
\end{align*}
Letting $i \to \infty$ and choosing $\delta \leq \frac{2\k}{3\k+ 12l}$, we actually obtain that 
\begin{align*}
\E \left[\int_0^{\infty} e^{-Kt} \left( \hat{X}^2_t + \hat{Y}^2_t \right) dt  \right] \leq \frac{1}{2} \E \left[\int_0^{\infty} e^{-Kt} \left( \hat{x}^2_t + \hat{y}^2_t \right) dt  \right],
\end{align*}
and therefore $\Phi$ is a contraction. 
\end{proof}

\begin{thm}\label{thm1}
Under Assumption~\ref{assume1}, for each $\mathcal{F}_0$-measurable square integrable random variable $\xi$, \eqref{eq:gMFB} has a unique solution in $L^2_{K}(0,\infty,\R^3)$. 
\end{thm}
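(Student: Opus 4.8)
The plan is to combine the base case (Lemma~\ref{sec3:lem1}) and the inductive step (Proposition~\ref{sec3:prop1}) by the continuity method, after first establishing the uniqueness/stability estimate on which both Lemma~\ref{sec3:lem1} and this theorem rely. For $\ld\in[0,1]$ let $P(\ld)$ denote the assertion that, for every $\mathcal{F}_0$-measurable square integrable $\xi$ and every $\phi,\psi\in L^2_K(0,\infty,\R)$, the system \eqref{eq:FFB} with parameter $\ld$ has a unique solution in $L^2_K(0,\infty,\R^3)$.

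\emph{Uniqueness.} Fix $\ld\in[0,1]$ and let $(X,Y,Z)$, $(X',Y',Z')$ be two solutions of \eqref{eq:FFB} with the same data; set $\hat X=X-X'$, $\hat Y=Y-Y'$, $\hat Z=Z-Z'$. The $\sigma\,dW_t$, $\phi$ and $\psi$ terms cancel, so $\hat X$ has no martingale part and $\hat X_0=0$. Since $\hat X,\hat Y\in L^2_K$, the map $t\mapsto e^{-Kt}\E[\hat X_t\hat Y_t]$ is integrable on $[0,\infty)$ (being dominated by $\tfrac12 e^{-Kt}\E[\hat X_t^2+\hat Y_t^2]$), so one can choose $T_i\uparrow\infty$ with $\E[e^{-KT_i}\hat X_{T_i}\hat Y_{T_i}]\to0$. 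Applying It\^o's formula to $e^{-Kt}\hat X_t\hat Y_t$ on $[0,T_i]$ and taking expectations as in the proof of Proposition~\ref{sec3:prop1}, then bounding the $\ld$-part of the drift via Assumption~\ref{assume1}(ii), the leftover term $-(1-\ld)K\hat X_t\hat Y_t$ by Young's inequality, and using $K<2\k$, yields
\[
\E\!\left[e^{-KT_i}\hat X_{T_i}\hat Y_{T_i}\right]\;\le\; -c\,\E\!\left[\int_0^{T_i}e^{-Kt}\big(\hat X_t^2+\hat Y_t^2\big)\,dt\right]
\]
for some constant $c>0$ depending only on $\k,K$. Letting $i\to\infty$ forces $\hat X\equiv\hat Y\equiv0$, and then $\hat Z\equiv0$ from the backward equation. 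In particular this supplies the uniqueness claim deferred in Lemma~\ref{sec3:lem1}, so $P(0)$ holds.

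\emph{Existence.} Set $\delta_0:=\frac{2\k}{3\k+12l}>0$; crucially $\delta_0$ is independent of $\ld_0$. By Proposition~\ref{sec3:prop1}, $P(\ld_0)$ implies $P(\ld_0+\delta)$ for every $\delta\in(0,\delta_0]$ with $\ld_0+\delta\le1$. Starting from $P(0)$ and increasing $\ld$ by $\min(\delta_0,1-\ld)$ at each step, we reach $P(1)$ after at most $\lceil 1/\delta_0\rceil$ steps. Since \eqref{eq:FFB} with $\ld=1$, $\phi\equiv0$, $\psi\equiv0$ is precisely \eqref{eq:gMFB}, $P(1)$ provides a unique solution of \eqref{eq:gMFB} in $L^2_K(0,\infty,\R^3)$ for every $\mathcal{F}_0$-measurable square integrable $\xi$.

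The substantive work---the contraction estimate with a $\ld_0$-independent step size---has already been done in Proposition~\ref{sec3:prop1}, so the only delicate points here are that the stability estimate be robust enough to hold at every $\ld\in[0,1]$ (in particular at $\ld=0$, which also closes the gap left in Lemma~\ref{sec3:lem1}; this is where the extra $(1-\ld)$ cross term must be absorbed using $K<2\k$) and that the boundary term at infinity be handled along a suitable subsequence $T_i\to\infty$ rather than assumed to vanish. Both are routine given the integrability built into the space $L^2_K(0,\infty,\R^3)$.
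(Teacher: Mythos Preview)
Your proof is correct and follows essentially the same approach as the paper: existence via the continuity method (Lemma~\ref{sec3:lem1} for $\ld=0$, then iterate Proposition~\ref{sec3:prop1} in steps of size $\delta_0=\frac{2\k}{3\k+12l}$ until $\ld=1$), and uniqueness via It\^o's formula applied to $e^{-Kt}\hat X_t\hat Y_t$ along a subsequence $T_i\to\infty$ together with Assumption~\ref{assume1}(ii). The only minor difference is that you establish the uniqueness estimate uniformly for all $\ld\in[0,1]$ (absorbing the extra $(1-\ld)$ cross term via $K<2\k$), whereas the paper carries out the computation only at $\ld=1$ and remarks that the $\ld=0$ case needed in Lemma~\ref{sec3:lem1} is similar; your version makes the logic slightly more self-contained but is not a different argument.
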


\begin{proof}
By Lemma~\ref{sec3:lem1}, for any  $\phi, \psi \in L^2_K(0,\infty, \R)$, there exists a solution in $L^2_K(0,\infty, \R)$ to \eqref{eq:FFB} with $\lambda=0$. Then according to Proposition~\ref{sec3:prop1}, for any $\phi,\psi \in L^2_K(0,\infty,\R)$ there exists a  solution to \eqref{eq:FFB} with $\lambda=\delta_0$. Repeating this process for $\lceil \frac{1}{\delta_0}\rceil$ many times, we conclude that there exists a solution to \eqref{eq:FFB} with $\lambda=1$. In particular, letting $\phi \equiv 0, \psi \equiv 0$, we get a solution to \eqref{eq:gMFB}. 

For the uniqueness, suppose there exist two solutions $(X,Y,Z), (X',Y',Z') \in L^2_K(0,\infty, \R^3)$ to \eqref{eq:gMFB}, and denote $\hat{X}=X-X', \hat{Y}=Y-Y', \hat{Z}=Z-Z'$. There exists a sequence of $T_i \to \infty$ such that $\E\left[ e^{-KT_i} \hat{X}_{T_i} \hat{Y}_{T_i}\right] \to 0$. By It\^{o}'s formula and Assumption~\ref{assume1}, we have that 
\begin{align*}
\E\left[ e^{-KT_i} \hat{X}_{T_i} \hat{Y}_{T_i} \right]=&\E\left[\int_0^{T_i} e^{-Kt} \left( {-K} \hat{X}_t \hat{Y}_t- \hat{X}_t (F(t,U_t)-F(t,U'_t) )+\hat{Y}_t (B(t,U_t)-B(t,U'_t) ) \right) dt  \right] \\
\leq & - (\k-K/2) \E\left[\int_0^{T_i} e^{Kt} \left(\hat{X}_t^2+\hat{Y}_t^2 \right) dt \right].
\end{align*}
Letting $T_i \to \infty$, we conclude that $\Vert \hat{X} \Vert^2_K=\Vert \hat{Y} \Vert^2_K =0$, and hence complete the proof. 
\end{proof}

\subsection{Fixed point argument}
We prove the existence of solution to \eqref{eq:gMFB} under another monotonicity condition, which in the spirit of \cite{MR1701517},\cite{MR2462676}. 
The main idea is as follows. Take any process $(x_t) \in L^2_{K}(0,\infty, \R)$ such that $x_0=\xi$. Using \cite[Theorem 4.1]{MR1695013},   there exists a unique solution $(\ol{y}_t,\ol{z}_t)$ to the following infinite horizon BSDE 
\begin{align}\label{eq:BSDE}
d\ol{y}_t=-F(t,x_t,\ol{y}_t,\cL(x_t,\ol{y}_t)) \,dt + \ol{z}_t \, dW_t, \quad \forall t \geq 0.
\end{align}
And then we show that there exists a unique solution to the forward McKean Vlasov SDE 
\begin{align}\label{eq:SDE}
\begin{cases}
dX_t=B(t,X_t,\ol{y}_t,\cL(X_t,\ol{y}_t))\, dt + \sigma \, dW_t, \\
X_0=\xi,
\end{cases}
\end{align}
and hence we construct a mapping which sends $(x_t)$ to $(X_t)$. We will prove that this mapping is a contraction, and hence its unique fixed point is the unique solution to \eqref{eq:gMFB}. First we present the main assumption of this subsection.

\begin{assume}\label{assume2}
(i) There exists some constants $\k_1,\k_2$ such that for any $t \in \R_+$, $x,x',y,y' \in \R$, $m \in \cP_2(\R^2)$
\begin{align*}
 (y-y')(F(t,x,y,m)-F(t,x,y',m) \leq -\k_1 |y-y'|^2 \quad \text{a.s.}, \\ 
 (x-x') (B(t,x,y,m)-B(t,x',y,m) )\leq -\k_2 |x-x'|^2 \quad \text{a.s.} 
 \end{align*}
(ii) $F(t,x,y,m), B(t,x,y,m)$ are Lipschitz in $(x,y,m)$. There exist some positive constant $l_1,l_2$ such that for any $t \in \R_+$, $x,x',y,y' \in \R$, $m,m' \in \cP_2(\R^2)$
\begin{align*}
|F(t,x,y,m)-F(t,x',y,m')| \leq l_1(|x-x'|+\W_2( m ,m') ) \quad \text{a.s.},\\
|B(t, x, y, m)-B(t,x,y', m')| \leq l_2 (|y-y'|+\W_2(m, m')) \quad \text{a.s.}
\end{align*}
(iii) There exist some positive constants $\e_1, \e_2$ and positive constant $K$ such that 
\begin{align*}
-2\k_2 +2 l_2 +2l_2 \e_2<K<2 \k_1 - 2l_1-2l_1 \e_1 ,
\end{align*}
and also 
\begin{align*}
 4l_1 l_2 \leq \e_1 \e_2 (-K +2 \k_1-2l_1 -2l_1 \e_1)(K+2 \k_2- 2 l_2 -2l_2 \e_2) .\end{align*}
(iv) $\Vert F(\cdot, 0,0,\delta_0) \Vert^2_{K} + \Vert B(\cdot, 0,0,\delta_0) \Vert^2_{K} < +\infty$.
\end{assume}

\begin{lemma}
Under Assumption~\ref{assume2}, for any $(x_t) \in L^2_K(0,\infty,\R)$ there exists a unique solution $(\ol{y},\ol{z})$ to \eqref{eq:BSDE} such that $(\ol{y},\ol{z})  \in  L^2_{K}(0, \infty, \R^2)$.
\end{lemma}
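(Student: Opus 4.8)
The plan is to solve \eqref{eq:BSDE} by a fixed point argument on the unknown process, freezing at each step the law generated by a candidate so that \eqref{eq:BSDE} becomes an ordinary (non--mean--field) infinite horizon BSDE covered by \cite[Theorem 4.1]{MR1695013}. Fix $(x_t)\in L^2_K(0,\infty,\R)$. For an arbitrary $(\eta_t)\in L^2_K(0,\infty,\R)$ set $\mu^\eta_t:=\cL(x_t,\eta_t)$; since $x,\eta\in L^2_K$, the map $t\mapsto\mu^\eta_t$ is measurable and $\int_0^\infty e^{-Kt}\W_2(\mu^\eta_t,\delta_0)^2\,dt\le \Vert x\Vert_K^2+\Vert\eta\Vert_K^2<\infty$. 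Consider the BSDE $dy_t=-F(t,x_t,y_t,\mu^\eta_t)\,dt+z_t\,dW_t$ on $[0,\infty)$, whose generator $g(t,\omega,y):=F(t,\omega,x_t(\omega),y,\mu^\eta_t)$ is progressively measurable, independent of $z$, dissipative in $y$ with $(y-y')(g(t,y)-g(t,y'))\le-\k_1|y-y'|^2$ by Assumption~\ref{assume2}(i), and Lipschitz in $y$ by Assumption~\ref{assume2}(ii); moreover, using Assumptions~\ref{assume2}(ii),(iv) and $\W_2(\mu^\eta_t,\delta_0)^2\le\E|x_t|^2+\E|\eta_t|^2$, the process $g(\cdot,0)=F(\cdot,x_\cdot,0,\mu^\eta_\cdot)$ lies in $L^2_K(0,\infty,\R)$. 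Since $0<K<2\k_1$ by Assumption~\ref{assume2}(iii) (which forces $\k_1>0$), \cite[Theorem 4.1]{MR1695013} yields a unique $(y,z)\in L^2_K(0,\infty,\R^2)$ solving this BSDE. This defines a map $\Gamma:(\eta_t)\mapsto(y_t)$ on $L^2_K(0,\infty,\R)$, and $(\ol y,\ol z)\in L^2_K$ solves \eqref{eq:BSDE} if and only if $\ol y$ is a fixed point of $\Gamma$ with $\ol z$ the associated second component; so it suffices to prove $\Gamma$ is a contraction.

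I would then take $(\eta_t),(\tilde\eta_t)\in L^2_K$ with images $y=\Gamma(\eta)$, $\tilde y=\Gamma(\tilde\eta)$ and write $\Delta y_t=y_t-\tilde y_t$, $\Delta z_t=z_t-\tilde z_t$, $\Delta\eta_t=\eta_t-\tilde\eta_t$. Since $\Delta y\in L^2_K$ one may choose $T_i\uparrow\infty$ with $\E[e^{-KT_i}|\Delta y_{T_i}|^2]\to0$; applying It\^o's formula to $e^{-Kt}|\Delta y_t|^2$ on $[0,T_i]$, taking expectations and letting $i\to\infty$, this gives
\begin{align*}
\E|\Delta y_0|^2+\Vert\Delta z\Vert_K^2=\E\int_0^\infty e^{-Kt}\Big(K|\Delta y_t|^2+2\Delta y_t\big(F(t,x_t,y_t,\mu^\eta_t)-F(t,x_t,\tilde y_t,\mu^{\tilde\eta}_t)\big)\Big)\,dt.
\end{align*}
Inserting $F(t,x_t,\tilde y_t,\mu^\eta_t)$, bounding the first increment by Assumption~\ref{assume2}(i), the second by Assumption~\ref{assume2}(ii), and using $\W_2(\mu^\eta_t,\mu^{\tilde\eta}_t)\le\sqrt{\E|\Delta\eta_t|^2}$ (couple through the common $x_t$), one gets the a.s.\ bound
\begin{align*}
2\Delta y_t\big(F(t,x_t,y_t,\mu^\eta_t)-F(t,x_t,\tilde y_t,\mu^{\tilde\eta}_t)\big)\le -2\k_1|\Delta y_t|^2+2l_1|\Delta y_t|\sqrt{\E|\Delta\eta_t|^2}.
\end{align*}
Discarding the non--negative left side of the It\^o identity, then using $\E|\Delta y_t|\le\sqrt{\E|\Delta y_t|^2}$ and Cauchy--Schwarz in $t$,
\begin{align*}
(2\k_1-K)\Vert\Delta y\Vert_K^2\le 2l_1\int_0^\infty e^{-Kt}\sqrt{\E|\Delta y_t|^2}\sqrt{\E|\Delta\eta_t|^2}\,dt\le 2l_1\Vert\Delta y\Vert_K\Vert\Delta\eta\Vert_K,
\end{align*}
whence $\Vert\Delta y\Vert_K\le\tfrac{2l_1}{2\k_1-K}\Vert\Delta\eta\Vert_K$. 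By Assumption~\ref{assume2}(iii), $2\k_1-K>2l_1+2l_1\e_1>2l_1$, so $\tfrac{2l_1}{2\k_1-K}<1$ and $\Gamma$ is a contraction on $L^2_K(0,\infty,\R)$; its unique fixed point $\ol y$, with the corresponding $\ol z$, is the unique solution of \eqref{eq:BSDE} in $L^2_K(0,\infty,\R^2)$.

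The main obstacle is the contraction constant: a pointwise Young splitting of the term $2l_1|\Delta y_t|\W_2(\mu^\eta_t,\mu^{\tilde\eta}_t)$ does not produce a constant below $1$, so one has to integrate first and apply Cauchy--Schwarz in time, after which the sharp inequality $K<2\k_1-2l_1-2l_1\e_1$ from Assumption~\ref{assume2}(iii) closes the estimate; the remaining constants $\k_2,l_2,\e_2$ (and the product condition on $l_1l_2$) of that assumption are not needed here and will enter only when this lemma is combined with the forward equation \eqref{eq:SDE}. I would also note that uniqueness can be obtained directly without the fixed point: the same It\^o computation applied to the difference of two $L^2_K$ solutions of \eqref{eq:BSDE} gives $\E|\Delta\ol y_0|^2+\Vert\Delta\ol z\Vert_K^2+(2\k_1-2l_1-K)\Vert\Delta\ol y\Vert_K^2\le0$, which forces $\Delta\ol y\equiv0$ and $\Delta\ol z\equiv0$ since $2\k_1-2l_1-K>0$.
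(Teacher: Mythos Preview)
Your proof is correct and follows essentially the same strategy as the paper: freeze the law $\cL(x_t,\eta_t)$, invoke \cite[Theorem 4.1]{MR1695013} for the resulting ordinary infinite-horizon BSDE, and show the induced map on $L^2_K$ is a contraction via It\^o's formula applied to $e^{-Kt}|\Delta y_t|^2$. The only technical difference is that the paper bounds the cross term $2l_1|\Delta y_t|\,\W_2(\mu^\eta_t,\mu^{\tilde\eta}_t)$ by a pointwise Young split $l_1|\Delta y_t|^2+l_1\E|\Delta\eta_t|^2$, yielding $(-K+2\k_1-l_1)\Vert\Delta y\Vert_K^2\le l_1\Vert\Delta\eta\Vert_K^2$, whereas you integrate first and apply Cauchy--Schwarz in $t$ to get $(2\k_1-K)\Vert\Delta y\Vert_K\le 2l_1\Vert\Delta\eta\Vert_K$; both give a contraction constant strictly below~$1$ precisely because $2\k_1-K>2l_1$. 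Your closing remark that ``a pointwise Young splitting \dots does not produce a constant below~$1$'' is therefore inaccurate---the paper does exactly that and it works---but this misstatement is only commentary and does not affect the validity of your argument.
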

\begin{proof}
According to \cite[Theorem 4.1]{MR1695013}, for any $(y_t) \in L^2_K(0, \infty,\R)$, there exists a unique solution $(\ol{y}_t,\ol{z}_t) \in L^2_K(0,\infty, \R^2)$ to the infinite horizon BSDE 
\begin{align}\label{eq:BSDE'}
d\ol{y}_t=-F(t,x_t,\ol{y}_t,\cL(x_t,{y}_t)) \,dt + \ol{z}_t \, dW_t, \quad \forall t \geq 0.
\end{align}
Therefore it suffices to show that $(y_t) \mapsto (\ol{y_t})$ is a contraction on $L^2_K(0, \infty,\R)$. Take any $(y_t), (y_t') \in L^2_K(0,\infty, \R)$, and denote by $(\ol{y}_t), (\ol{y}_t')$ their corresponding solutions to \eqref{eq:BSDE'}.

From It\^{o}'s formula, one can easily deduce that 
\begin{align}\label{lem2:2}
-K e^{-K t} |\ol{y}_t-\ol{y}_t'|^2 \,dt +e^{-K t} |\ol{z}_t-\ol{z}_t'|^2 \, dt= d e^{-K t} |\ol{y}_t-\ol{y}_t'|^2-2 e^{-K t} (\ol{y}_t-\ol{y}_t') \, d (\ol{y}_t-\ol{y}_t')
\end{align}
Since $\ol{y}, \ol{y}' \in L^2_{K}(0,\infty, \R)$, there exists a sequence of $T_i \to \infty$ such that $\E\left[e^{-K T_i} |\ol{y}_{T_i}-\ol{y}_{T_i}'|^2 \right] \to 0$.
Integrating \eqref{lem2:2} over interval $[0,T_i]$, taking expectation, and letting $T_i \to \infty$, we obtain that 
\begin{align*}
\E& \left[ \int_0^{\infty} -K e^{-K t} |\ol{y}_t-\ol{y}_t'|^2  + e^{-K t} | \ol{z}_t-\ol{z}_t'|^2 \,dt  \right]= -\E\left[|\ol{y}_0-\ol{y}_0'|^2 \right] \notag \\
& \quad \quad \quad \quad +\E \left[\int_0^{\infty} 2 e^{-K t} (\ol{y}_t-\ol{y}_t') \left( F(t,x_t,\ol{y}_t, \cL(x_t,y_t))-F(t,x_t,\ol{y}_t', \cL(x_t, y_t'))\right)  dt \right].
\end{align*}
For the second term on the right hand side, we have that 
\begin{align*}
&2 e^{-K t} (\ol{y}_t - \ol{y}_t')\left( F(t,x_t,\ol{y}_t, \cL(x_t,y_t) )-F(t,x_t,\ol{y}_t', \cL(x_t,y_t') ) \right) \\
& \quad \quad \leq 2 e^{-K t} (\ol{y}_t - \ol{y}_t')\left( F(t,x_t,\ol{y}_t, \cL(x_t,y_t) )-F(t,x_t,\ol{y}_t', \cL(x_t,y_t) ) \right) \\
& \quad \quad \ \ \ + 2 e^{-K t} (\ol{y}_t - \ol{y}_t')\left( F(t,x_t,\ol{y}_t', \cL(x_t,y_t) )-F(t,x_t,\ol{y}_t', \cL(x_t,y_t') ) \right) \\
& \quad \quad  \leq -2  \k_1   e^{-K t} |\ol{y}_t - \ol{y}_t'|^2 + 2  e^{-K t} |\ol{y}_t-\ol{y}_t'| \left( \W_2(\cL(x_t,y_t), \cL(x_t,y_t')) \right) 
\end{align*}
Together with $\W_2^2( \cL(x_t,y_t),\cL(x_t, y_t')) \leq l_1 \E[|y_t -y_t'|^2]$, it holds that 
\begin{align*}
(-K +2 \k_1 - l_1 ) \Vert \ol{y}-\ol{y}' \Vert^2_{K} + \Vert \ol{z}-\ol{z}' \Vert^2_{K} \leq  l_1 \Vert y-y'\Vert^2_{K}.
\end{align*}
Since $-K +2 \k_1 - l_1 > l_1$, the mapping $(y_t) \mapsto (\ol{y}_t)$ is indeed a contraction.
\end{proof}

\begin{prop}\label{prop1}
Under Assumption~\ref{assume2}, for any $(\ol{y}_t) \in L^2_K(0,\infty,\R)$  there exists a unique solution $X$ to \eqref{eq:SDE}, and furthermore $X \in L^2_{K}(0,+\infty,\R)$. 
\end{prop}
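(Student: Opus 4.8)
The plan is to regard \eqref{eq:SDE} as a McKean--Vlasov SDE driven by the \emph{fixed} input process $(\ol{y}_t)$, solve it first on every finite horizon, and then promote the finite-horizon solution to a global one lying in $L^2_K$ by an a priori estimate modeled on the proof of Lemma~\ref{sec3:lem1}. Fix $(\ol{y}_t)\in L^2_K(0,\infty,\R)$. For each $T>0$ one has $(\ol{y}_t)_{t\le T}\in L^2([0,T]\times\Omega)$ (since $e^{-Kt}\ge e^{-KT}$ on $[0,T]$) and likewise $(B(t,0,0,\delta_0))_{t\le T}\in L^2([0,T]\times\Omega)$ by Assumption~\ref{assume2}(iv); combined with the Lipschitz continuity of $B$ in $(x,y,m)$ from Assumption~\ref{assume2}(ii) and the estimate $\W_2(\cL(X_t,\ol{y}_t),\cL(X_t',\ol{y}_t))\le(\E|X_t-X_t'|^2)^{1/2}$, this places \eqref{eq:SDE} in the scope of the classical fixed-point construction for McKean--Vlasov SDEs on a compact interval (freeze the joint law flow $t\mapsto\cL(X_t,\ol{y}_t)$, solve the resulting linear-growth SDE, update the flow, and iterate). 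This yields a unique strong solution on each $[0,T]$ with $\sup_{t\le T}\E[X_t^2]<\infty$, and by consistency of these solutions on overlapping intervals a unique adapted continuous process $(X_t)_{t\ge0}$ solving \eqref{eq:SDE} on $[0,\infty)$.

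The substance of the statement is then $X\in L^2_K(0,\infty,\R)$. I would apply It\^o's formula to $e^{-Kt}|X_t|^2$ on $[0,T]$ and take expectations, the stochastic-integral term having zero expectation by the moment bound just obtained, to get
\begin{align*}
\E\!\left[e^{-KT}|X_T|^2\right]=\E[\xi^2]+\E\!\left[\int_0^T e^{-Kt}\Bigl(-K|X_t|^2+2X_tB(t,X_t,\ol{y}_t,\cL(X_t,\ol{y}_t))+\sigma^2\Bigr)dt\right].
\end{align*}
Next I would split $X_tB(t,X_t,\ol{y}_t,\cL(X_t,\ol{y}_t))=X_t\bigl(B(t,X_t,\ol{y}_t,\cL(X_t,\ol{y}_t))-B(t,0,\ol{y}_t,\cL(X_t,\ol{y}_t))\bigr)+X_tB(t,0,\ol{y}_t,\cL(X_t,\ol{y}_t))$: the first summand is at most $-\k_2|X_t|^2$ by Assumption~\ref{assume2}(i), while for the second I would use Assumption~\ref{assume2}(ii) together with $\W_2^2(\cL(X_t,\ol{y}_t),\delta_0)=\E[X_t^2]+\E[\ol{y}_t^2]$ to bound $|B(t,0,\ol{y}_t,\cL(X_t,\ol{y}_t))|\le|B(t,0,0,\delta_0)|+l_2|\ol{y}_t|+l_2(\E[X_t^2]+\E[\ol{y}_t^2])^{1/2}$.

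Feeding these in and applying Young's inequality to the resulting cross terms with parameters tuned so that the total coefficient lost on $\E[X_t^2]$ stays strictly below $2l_2+2l_2\e_2$, the first inequality of Assumption~\ref{assume2}(iii) supplies a constant $\gamma:=K+2\k_2-2l_2-2l_2\e_2>0$ and
\begin{align*}
\E\!\left[e^{-KT}|X_T|^2\right]+\gamma\,\E\!\left[\int_0^T e^{-Kt}|X_t|^2\,dt\right]\le\E[\xi^2]+C,
\end{align*}
with $C$ depending only on $\k_2,l_2,K,\sigma,\Vert\ol{y}\Vert^2_K$ and $\Vert B(\cdot,0,0,\delta_0)\Vert^2_K$ (finite by Assumption~\ref{assume2}(iv)). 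Discarding the nonnegative boundary term and letting $T\to\infty$ by monotone convergence gives $\Vert X\Vert^2_K\le(\E[\xi^2]+C)/\gamma<\infty$, which finishes the proof.

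The finite-horizon step is routine McKean--Vlasov theory, so I expect the only genuine obstacle to be the bookkeeping in the a priori estimate: one must calibrate the Young's-inequality constants (in particular how the $\e_2$-budget is distributed among the $|\ol{y}_t|$, $|B(t,0,0,\delta_0)|$ and $\W_2$ cross terms) precisely enough that the loss on the coefficient of $\E[\int e^{-Kt}X_t^2]$ is exactly compensated by the margin $K+2\k_2-2l_2-2l_2\e_2>0$ furnished by Assumption~\ref{assume2}(iii), rather than by a cruder bound that would require a stronger hypothesis. A minor technical point is justifying the vanishing expectation of the It\^o martingale term, which follows from the finite-interval second-moment bound established in the first step.
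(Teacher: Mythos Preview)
Your proposal is correct and follows essentially the same route as the paper: apply It\^o's formula to $e^{-Kt}|X_t|^2$, split $X_tB(t,X_t,\ol{y}_t,\cL(X_t,\ol{y}_t))$ into a monotonicity piece (yielding $-\k_2|X_t|^2$) and a remainder, bound the remainder via the Lipschitz estimate on $B$ and Young's inequality, and use Assumption~\ref{assume2}(iii) to make the net coefficient on $\E[|X_t|^2]$ strictly negative. The only cosmetic difference is the choice of reference point for the Lipschitz split: the paper compares $B(t,0,\ol{y}_t,\cL(X_t,\ol{y}_t))$ to $B(t,0,\ol{y}_t,\delta_0\otimes\cL(\ol{y}_t))$, which keeps the $\ol{y}$-marginal fixed and gives the single bound $\W_2(\delta_0\otimes\cL(\ol{y}_s),\cL(X_s,\ol{y}_s))\le\sqrt{\E[X_s^2]}$, whereas you compare directly to $B(t,0,0,\delta_0)$ and pick up extra $l_2|\ol{y}_t|$ and $l_2\sqrt{\E[\ol{y}_t^2]}$ terms; since these are in $L^2_K$ they are harmlessly absorbed into your constant $C$ after Young's inequality, so both decompositions close in the same way.
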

\begin{proof}
The existence and uniqueness of solution to \eqref{eq:SDE} is standard (see e.g. \cite{MR3629171}). We only need to show that the unique solution $X$ belongs to the space $L^2_{K}(0,+\infty, \R)$.

Applying It\^{o}'s formula, it can be easily seen that 
\begin{align}\label{eq:prop2}
\E \left[e^{-K t} |X_t|^2 \right]=&\E[\xi^2]+ 2 \,\E \left[ \int_0^t e^{-K s} X_s \cdot B(s, X_s, \ol{y}_s, \cL(X_s,\ol{y}_s)) \, ds  \right] \notag \\
&-K \E\left[\int_0^s e^{-K s} |X_s|^2 \, ds \right]+\E \left[\int_0^t e^{-K s} \sigma^2 \, ds \right].
\end{align}
For the integrand of the second term on the right, we have that 
\begin{align*}
X_s \cdot B(s, X_s, \ol{y}_s,& \cL(X_s,\ol{y}_s)) = X_s \cdot \left (B(s, X_s, \ol{y}_s, \cL(X_s,\ol{y}_s))-B(s, 0, \ol{y}_s, \cL(X_s,\ol{y}_s))  \right) \\
& \quad \quad \quad \quad \quad \ \   +X_s \cdot B(s, 0, \ol{y}_s, \cL(X_s,\ol{y}_s)) \\
\leq & -\k_2 |X_s|^2+ |X_s| \cdot \left(|B(s,0,\ol{y}_s,\delta_{0} \otimes \cL(\ol{y}_s))| +l_2 \W_2(\delta_0 \otimes \cL(\ol{y}_s), \cL(X_s,\ol{y}_s) ) \right).
\end{align*}
With the fact that $\W_2(\delta_0 \otimes \cL(\ol{y}_s), \cL(X_s,\ol{y}_s)) \leq  \sqrt{\E[|X_s|^2 ] }$, one can easily derive that 
\begin{align*}
\E \left[X_s \cdot B(s, X_s, \ol{y}_s, \cL(X_s))\right] \leq ( -\k_2+l_2+ \e_2)  |X_s|^2+\frac{1}{4\e_2} \left( |B(s,0,\ol{y}_s,\delta_{0} \otimes \cL(\ol{y}_s))|^2 \right).
\end{align*}
Therefore from \eqref{eq:prop2}, we obtain that 
\begin{align*}
\E \left[e^{-K t} |X_t|^2 \right] \leq (-2\k_2+2l_2 -K+2 \e_2) \int_0^t e^{-K s} |X_s|^2 \, ds +C_{\e_2},
\end{align*}
where $C_{\e_2}$ is a constant depends on $K, \sigma, \E[\xi^2], l_2, \Vert B(\cdot, 0,0,\delta_0) \Vert^2_{K}, \Vert \ol{y} \Vert^2_{K}$. Due to Assumption~\ref{assume2} (iii), the coefficient before the integral on the right hand side is negative, and thus we conclude the $\Vert X \Vert^2_{K} < +\infty$. 
\end{proof}

\begin{thm}\label{thm2}
There exists a unique solution (X,Y,Z) to \eqref{eq:gMFB} in $L^2_{K}(0,\infty, \R^3)$.
\end{thm}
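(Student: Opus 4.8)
The plan is to carry out the fixed-point scheme described just before Assumption~\ref{assume2}. On the complete space $L^2_K(0,\infty,\R)$ define a map $\Gamma$ as follows: given $(x_t)\in L^2_K(0,\infty,\R)$, let $(\ol y_t,\ol z_t)\in L^2_K(0,\infty,\R^2)$ be the unique solution of the infinite horizon BSDE \eqref{eq:BSDE} furnished by the preceding lemma, and let $X\in L^2_K(0,\infty,\R)$ be the unique solution of the forward McKean--Vlasov SDE \eqref{eq:SDE} furnished by Proposition~\ref{prop1}; set $\Gamma(x):=X$. Every image of $\Gamma$ satisfies $X_0=\xi$. If $x^\ast=\Gamma(x^\ast)$ and $(Y,Z)$ denotes the BSDE data associated to $x^\ast$, then \eqref{eq:BSDE} with $x=X=x^\ast$ together with \eqref{eq:SDE} with $\ol y=Y$ is exactly \eqref{eq:gMFB}, so $(x^\ast,Y,Z)$ solves \eqref{eq:gMFB}; conversely, any solution $(X,Y,Z)\in L^2_K(0,\infty,\R^3)$ of \eqref{eq:gMFB} has $(Y,Z)$ equal to the BSDE data for $x=X$ (uniqueness in the preceding lemma) and $X$ equal to the SDE solution for $\ol y=Y$ (uniqueness in Proposition~\ref{prop1}), so $X=\Gamma(X)$. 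Thus both existence and uniqueness for \eqref{eq:gMFB} reduce to showing that $\Gamma$ is a strict contraction.

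That contraction estimate is the crux, and the step I expect to be hardest, since one has to chain two It\^o computations and check that Assumption~\ref{assume2}(iii) is precisely the algebraic input making the composite constant small. Fix $(x_t),(x_t')$ with images $(\ol y_t),(\ol y_t')$ and $(X_t),(X_t')$, and write $\hat x,\hat{\ol y},\hat X$ for the differences. Applying It\^o to $e^{-Kt}|\hat{\ol y}_t|^2$, integrating up to a sequence $T_i\to\infty$ along which $\E[e^{-KT_i}|\hat{\ol y}_{T_i}|^2]\to 0$ (available since $\hat{\ol y}\in L^2_K$), and using the $y$-monotonicity of $F$ from Assumption~\ref{assume2}(i), the Lipschitz estimate for $F$ in $(x,m)$ from Assumption~\ref{assume2}(ii), the bound $\W_2(\cL(x_t,\ol y_t),\cL(x_t',\ol y_t'))\le \sqrt{\E|\hat x_t|^2}+\sqrt{\E|\hat{\ol y}_t|^2}$, and Young's inequality with parameter $\e_1$, one obtains
\begin{align*}
(-K+2\k_1-2l_1-2l_1\e_1)\Vert\hat{\ol y}\Vert_K^2 + \Vert\hat{\ol z}\Vert_K^2 + \E[|\hat{\ol y}_0|^2] \le \frac{2l_1}{\e_1}\Vert\hat x\Vert_K^2,
\end{align*}
whose left-hand coefficient is positive by the upper bound on $K$ in Assumption~\ref{assume2}(iii). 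Applying It\^o to $e^{-Kt}|\hat X_t|^2$ --- here $\hat X$ has no martingale part because the two forward equations share the diffusion $\sigma$, and the $t=0$ contribution vanishes since $X_0=X_0'=\xi$ --- and using the $x$-monotonicity of $B$, the Lipschitz estimate for $B$ in $(y,m)$, the analogous Wasserstein bound, and Young's inequality with parameter $\e_2$, one obtains
\begin{align*}
(K+2\k_2-2l_2-2l_2\e_2)\Vert\hat X\Vert_K^2 \le \frac{2l_2}{\e_2}\Vert\hat{\ol y}\Vert_K^2,
\end{align*}
whose left-hand coefficient is positive by the lower bound on $K$ in Assumption~\ref{assume2}(iii). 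Discarding $\Vert\hat{\ol z}\Vert_K^2+\E[|\hat{\ol y}_0|^2]\ge 0$ and composing the two estimates gives
\begin{align*}
\Vert\hat X\Vert_K^2 \le \frac{4l_1l_2}{\e_1\e_2(-K+2\k_1-2l_1-2l_1\e_1)(K+2\k_2-2l_2-2l_2\e_2)}\Vert\hat x\Vert_K^2 ,
\end{align*}
and the second inequality in Assumption~\ref{assume2}(iii) makes the prefactor at most $1$; retaining instead the nonnegative terms dropped on the left of the first estimate upgrades this to a strict contraction.

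Granting the contraction, the Banach fixed point theorem produces a unique $x^\ast$ with $\Gamma(x^\ast)=x^\ast$, and by the first paragraph $(x^\ast,Y,Z)$ is a solution of \eqref{eq:gMFB} in $L^2_K(0,\infty,\R^3)$, while any solution coincides with it. The only genuinely delicate part of the write-up is tracking the constants through the two It\^o estimates so that they match the exact form of Assumption~\ref{assume2}(iii); everything else --- existence of the BSDE and SDE solutions, the localizing sequences $T_i$, and the Wasserstein and Young bounds --- is routine and already appears in the preceding lemma and Proposition~\ref{prop1}.
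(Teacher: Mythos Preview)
Your argument is essentially identical to the paper's: the same composite map $\Gamma=\Phi\circ\Psi$, the same two It\^o estimates on $e^{-Kt}|\hat{\ol y}_t|^2$ and $e^{-Kt}|\hat X_t|^2$, and the same product constant $4l_1l_2/[\e_1\e_2(-K+2\k_1-2l_1-2l_1\e_1)(K+2\k_2-2l_2-2l_2\e_2)]$. One small correction: your device for upgrading the non-strict bound to a strict contraction---retaining $\Vert\hat{\ol z}\Vert_K^2+\E[|\hat{\ol y}_0|^2]$ on the left---does not work, since those terms are not uniformly bounded below by a positive multiple of $\Vert\hat x\Vert_K^2$ and hence do not improve the Lipschitz constant; the paper simply writes a strict inequality at the final step, so in effect both treatments need the second condition in Assumption~\ref{assume2}(iii) to be strict.
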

\begin{proof}
For any $x \in L^2_{K}(0,\infty, \R)$ such that $x_0=\xi$, define $\Psi(x):= (\ol{y},\ol{z}) \in L^2_{K}(0,\infty, \R^2)$ to be the unique solution to \eqref{eq:BSDE}, and for any $(\ol{y},\ol{z}) \in L^2_{K}(0,\infty, \R^2)$, define $\Phi(\ol{y},\ol{z}):=X \in L^2_{K}(0,\infty, \R)$ to be the unique solution to \eqref{eq:SDE}. We prove that the composition $\Phi \circ \Psi: L^2_{K}(0,\infty,\R) \to L^2_{K}(0,\infty, \R)$ is a contraction, and hence the fixed point of $\Phi \circ \Psi$ provides the unique solution to \eqref{eq:gMFB}. Take $x, x' \in L^2_{K}(0,\infty,\R)$ such that $x_0=x'_0=\xi$, $(\ol{y},\ol{z})=\Psi(x)$, $(\ol{y}',\ol{z}')=\Psi(x')$, and $X=\Phi(\ol{y},\ol{z})$, $X'=\Phi(\ol{y}',\ol{z}')$.

From It\^{o}'s formula, one can easily deduce that 
\begin{align}\label{thm2:1}
-K e^{-K t} |\ol{y}_t-\ol{y}_t'|^2 \,dt +e^{-K t} |\ol{z}_t-\ol{z}_t'|^2 \, dt= d e^{-K t} |\ol{y}_t-\ol{y}_t'|^2-2 e^{-K t} (\ol{y}_t-\ol{y}_t') \, d (\ol{y}_t-\ol{y}_t').
\end{align}
Since $\ol{y}, \ol{y}' \in L^2_{K}(0,\infty, \R)$, there exists a sequence of $T_i \to \infty$ such that $\E\left[e^{-K T_i} |\ol{y}_{T_i}-\ol{y}_{T_i}'|^2 \right] \to 0$.
Integrating \eqref{thm2:1} over interval $[0,T_i]$, taking expectation, and letting $T_i \to \infty$, we obtain that 
\begin{align}\label{thm2:2}
\E& \left[ \int_0^{\infty} -K e^{-K t} |\ol{y}_t-\ol{y}_t'|^2  + e^{-K t} | \ol{z}_t-\ol{z}_t'|^2 \,dt  \right]= -\E\left[|\ol{y}_0-\ol{y}_0'|^2 \right] \notag \\
& \quad \quad \quad \quad +\E \left[\int_0^{\infty} 2 e^{-K t} (\ol{y}_t-\ol{y}_t') \left( F(t,x_t,\ol{y}_t, \cL(x_t,\ol{y}_t))-F(t,x_t',\ol{y}_t', \cL(x_t',\ol{y}_t'))\right)  dt \right].
\end{align}
For the second term on the right hand side, we have that 
\begin{align*}
&2 e^{-K t} (\ol{y}_t - \ol{y}_t')\left( F(t,x_t,\ol{y}_t, \cL(x_t,\ol{y}_t) )-F(t,x_t',\ol{y}_t', \cL(x_t',\ol{y}_t') ) \right) \\
&   \leq -2  \k_1   e^{-K t} |\ol{y}_t - \ol{y}_t'|^2 + 2 l_1 e^{-K t} |\ol{y}_t-\ol{y}_t'| \left( |x_t-x_t'|+\W_2(\cL(x_t,\ol{y}_t), \cL(x_t',\ol{y}_t')) \right) \\
&   \leq -2  \k_1   e^{-K t} |\ol{y}_t - \ol{y}_t'|^2 + 2 l_1 e^{-K t} |\ol{y}_t-\ol{y}_t'| \left( |x_t-x_t'|+\sqrt{\E[|x_t -x_t'|^2]}+\sqrt{\E[|\ol{y}_t -\ol{y}_t'|^2]} \right).
\end{align*}
Therefore it holds that 
\begin{align}\label{thm2:3}
(-K +2 \k_1 -2l_1-2 l_1 \e_1) \Vert \ol{y}-\ol{y}' \Vert^2_{K} + \Vert \ol{z}-\ol{z}' \Vert^2_{K} \leq \frac{2l_1}{\e_1} \Vert x-x'\Vert^2_{K}.
\end{align}

Applying It\^{o}'s formula to $d e^{-K t} |X_t-X_t'|^2$, similarly we obtain that 
\begin{align*}
 \E &\left[\int_0^{\infty} K e^{-K t} |X_t -X_t'|^2 \, dt  \right]\\
&=\E \left[ \int_0^{\infty} 2 e^{-K t} (X_t-X_t' )\left(B(t,X_t, \ol{y}_t, \cL(X_t,\ol{y}_t))-B(t,X_t', \ol{y}_t', \cL(X_t',\ol{y}_t')) \right) dt \right]
\end{align*}
Note that 
\begin{align*}
&\E \left[ 2 e^{-K t} (X_t-X_t' )\left(B(t,X_t, \ol{y}_t, \cL(X_t,\ol{y}_t))-B(t,X_t', \ol{y}_t', \cL(X_t',\ol{y}_t')) \right) \right] \\
& \quad \quad  \leq \E \left[ -2 \k_2 e^{-K t} |X_t-X_t'|^2+2 l_2 e^{-K t} |X_t-X_t'| \left(| \ol{y}_t-\ol{y}_t'|+ \W_2 ( \cL(X_t,\ol{y}_t), \cL(X_t',\ol{y}_t')) \right)\right]  \\
& \quad \quad \leq (-2 \k_2+2 l_2+ 2 l_2 \e_2)  \E \left[ e^{-K t} |X_t-X_t'|^2 \right]+ \frac{2 l_2} {\e_2} \E\left[e^{-K t} |\ol{y}_t-\ol{y}_t'|^2 \right],
\end{align*}
and therefore 
\begin{align}\label{thm2:4}
(K+2 \k_2- 2 l_2 -2 l_2 \e_2) \Vert X-X' \Vert^2_{K} \leq \frac{2 l_2}{\e_2} \Vert \ol{y}-\ol{y}' \Vert^2_{K}.
\end{align}

According to Assumption~\ref{assume2} (iii), \eqref{thm2:3}, \eqref{thm2:4}, it can be easily seen that 
\begin{align*}
\Vert X-X' \Vert^2_{K} \leq& \frac{2 l_2}{\e_2(K+2 \k_2- 2 l_2 -2 l_2 \e_2)  } \Vert \ol{y}-\ol{y}' \Vert^2_{K} \\
\leq & \frac{ 4l_1 l_2 }{\e_1 \e_2 (-K +2 \k_1 -2l_1-2l_1 \e_1)(K+2 \k_2- 2 l_2 -2 l_2 \e_2) } \Vert x-x' \Vert^2_{K} <\Vert x-x' \Vert^2_{K},
\end{align*}
and therefore $\Phi \circ \Psi$ is a contraction.

\end{proof}

\section{Infinite horizon mean field game and mean field type control}\label{sec3}
In this section, we apply our main results to solve the infinite horizon mean field type control problem and  the infinite horizon mean field game. First in Subsection~\ref{subsec3.1}, we derive the corresponding McKean-Vlasov FBSDEs \eqref{eq:mfc}  and \eqref{eq:mfg} by Pontryagin's maximum principle, and solve the problems given solutions to \eqref{eq:mfc} and \eqref{eq:mfg}. Then in Subsection~\ref{subsec3.2}, we provide sufficient conditions for the existence of solutions to \eqref{eq:mfc} and \eqref{eq:mfg}.  Let $r>0$ be a discount factor and $A \subset \R$ be a convex control space. Suppose $b,f:  \R_+  \times \R \times \mathcal{P}_2(\R) \times A \to \R$ are two measurable functions. We work under the following assumption.
\begin{assume}\label{assume3}
(i) $b(t,x,\mu,a)$ is Lipschitz in $(x,\mu,a)$, and $f(t,x,\mu,a)$ is of at most quadratic growth in $(x,\mu,a)$. There exists a positive constant $l$ such that for any $\mu, \mu' \in \cP_2(\R)$, $t \in \R_+$, $x \in R$, $a \in A$,
\begin{align*}
|b(t,x,\mu,a)-b(t,x,\mu',a)| \leq l \W_2(\mu, \mu').
\end{align*}
(ii) $||b(\cdot,0,\delta_0,a)||^2_{r} < +\infty$, $\int_0^{\infty} e^{-rt} |f(t,0,\delta_0,a)| \, dt < +\infty$ for some (and thus any) $a \in A$. \\
(iii) There exists a constant $\k > l-\frac{r}{2}$ such that for any $t >0$, $a \in A$, $\mu \in \cP_2(\R)$, $x,x' \in \R$, it holds that
\begin{align*}
(x-x')(b(t,x,\mu, a)-b(t,x',\mu,a)) \leq -\k (x-x')^2.
\end{align*}
\end{assume}

\subsection{Pontryagin's maximum principle}\label{subsec3.1}
 
Define $\mathcal{A}:=L^2_{r}(0,\infty,A)$ to be the space of all admissible controls. For any control $\a \in \mathcal{A}$, let $(X_t)$ be a strong solution to the following controlled McKean-Vlasov SDE
\begin{align*}
\begin{cases}
dX_t =b(t,X_t, \cL(X_t), \a_t) \, dt + \sigma \, dW_t, \\
X_0=\xi. 
\end{cases}
\end{align*}
 As in the proof of Proposition~\ref{prop1}, it can be easily shown that under Assumption~\ref{assume3}, we have that $(X_t) \in L^2_{r}(0,\infty,\R)$. The cost functional takes the form 
\begin{align*}
J(\a):= \E \left[ \int_0^{\infty} e^{-rt} f(t,X_t,\cL(X_t),\a_t) \, dt \right],
\end{align*}
which is finite for any $\a \in \mathcal{A}$ due to Assumption~\ref{assume3}. We want to solve the minimization problem 
\begin{align}\label{controlproblem}
\inf_{\a \in \mathcal{A}} J(\a).
\end{align}
 Let us formally derive the maximum principle of the mean field  type control problem. Suppose $\a$ is an optimal control. Choose another admissible control $\beta$, define $\a^{\e}:=\a+\e\beta$, and denote by $X^{\e}$ the state trajectory corresponding to the control $\a^{\e}$. 
Let $$V_t= \lim\limits_{\e \to 0} \frac{X^{\e}_t-X_t}{\e}$$ be the variation process. Introduce the short-hand notation 
 \begin{align*}
 \theta_t:=(X_t, \cL(X_t), \a_t), \quad \theta^{\e}_t= (X^{\e}_t, \cL(X^{\e}_t), \a^{\e}_t).
 \end{align*}
Then it can be shown that $V$ satisfies
\begin{align*}
dV_t&= \left( \pa_x b(t,\theta_t) \cdot V_t + \wt{\E}\left[ \pa_{\mu} b(t,\theta_t )(\wt{X}_t) \cdot \wt{V}_t \right]+\pa_a b(t,\theta_t)\cdot \beta_t  \right) dt, \\
V_0&=0,
\end{align*}
where $(\wt{X}, \wt{V})$ is an independent copy of $(X,V)$ defined on $(\wt{\Omega}, \wt{\mathcal{F}},\wt{ \P})$ and $\wt{\E}\left[ \pa_{\mu} b(t,\theta_t )(\wt{X}_t) \cdot \wt{V}_t \right]$ is the derivative on the probability measure space when the state variable and the control are fixed, i.e.,
\begin{align}\label{eq:mudiff}
 \left.\wt{\E}\left[ \pa_{\mu} b(t,x,\cL(X_t), a )(\wt{X}_t) \cdot \wt{V}_t \right]\right|_{x=X_t,a=\a_t}.
\end{align}

To make \eqref{eq:mudiff} clear, in the following remark we briefly introduce how to differentiate functions of probability measures. We refer readers to \cite[Chapter 5]{MR3752669} for a nice survey on this topic. 
\begin{remark}
Let $\ol{\Omega}$ be a polish space and $(\ol{\P},\ol{\mathcal{F}})$ be an atomless probability measure over $\ol{\Omega}$. For any function $u: \cP_2(\R) \to \R$, we define its lift to the Hilbert space $L^2(\ol{\Omega}, \ol{\mathcal{F}},\ol{\P}; \R)$ by $\ol{u}(X):=u(\cL(X))$. Then $u$ is said to differentiable at $\mu_0=\cL(X_0)$ if $\ol{u}$ is Fr\'{e}chet differentiable at $X_0$. By identifying $L^2(\ol{\Omega}, \ol{\mathcal{F}},\ol{\P}; \R)$ with its dual, the Fr\'{e}chet derivative of $\ol{u}$ at $X_0$, denoted by $D \ol{u}(X_0)$, is an element in  $L^2(\ol{\Omega}, \ol{\mathcal{F}},\ol{\P}; \R)$. It can be shown that there exists a measurable function $\pa_{\mu} u(\mu_0):\R \to \R$ such that $\pa_{\mu} u(\mu_0)(X_0)=D \ol{u}(X_0)$ $\P$-a.s. Therefore we define the derivative of $u$ at $\mu_0$ as the measurable function $\pa_{\mu} u (\mu_0)$, which satisfies 
\begin{align*}
u(\mu)=u(\mu_0)+\E\left[\pa_{\mu} u(\mu_0)(X_0)\cdot (X-X_0) \right]+o(||X-X_0||_2),
\end{align*}
where $\cL(X)=\mu, \cL(X_0)=\mu_0$. 
\end{remark}

The function $\a \to J(\a)$ is G\^{a}teaux differentiable in the direction $\beta$ and its derivative is given by 
\begin{align*}
\left. \frac{d}{d \e} J(\a+\e \beta) \right|_{\e=0} = \E \left[\int_0^{\infty }e^{-rt} \left( \pa_x f(t,\theta_t) \cdot V_t + \wt{\E} \left[\pa_{\mu} f(t,\theta_t)(\wt{X}_t)\cdot \wt{V}_t \right]+\pa_a f(t,\theta_t) \cdot \beta_t  \right) dt \right] .
\end{align*}
Define the generalized Hamiltonian 
\begin{align}\label{eq:H}
\cH(t,x,{\mu},a,y):= b(t,x,{\mu},a) \cdot y+f(t,x,{\mu},a)-r x  y.
\end{align}
We consider the following infinite horizon BSDE 
\begin{align}\label{eq:BSDEa}
dY_t =- \left( \pa_x \cH(t, \Theta_t)+ \wt{\E} \left[ \pa_{\mu} \cH(t,\wt{\Theta}_t)(X_t)\right] \right) dt +Z_t \, dW_t,
\end{align}
where $\Theta_t:=(\theta_t,Y_t)=(X_t, \cL(X_t), \a_t, Y_t)$ and $(\wt{\Theta},\wt{\Omega}, \wt{\mathcal{F}},\wt{ \P})$ is an independent copy of $(\Theta, \Omega, \mathcal{F}, \P)$. 

Applying It\^{o}'s formula to the process $(e^{-rt}V_tY_t)$, it can be easily seen that
\begin{align*}
\left. \frac{d}{d \e} J(\a+\e \beta) \right|_{\e=0} = \E \left[ \int_0^{\infty} e^{-rt} \pa_a \cH(t, \Theta_t) \cdot \beta_t \, dt \right].
\end{align*}
Thus when $\a$ is an optimal admissible control with the associated stochastic processes $(X_t,Y_t,Z_t)$, it holds that
\begin{align*}
 \cH(t, X_t, \cL(X_t), \a_t, Y_t ) =\min_{a \in A}  \cH(t, X_t, \cL(X_t), a, Y_t ) \quad \quad \quad \text{Leb} \otimes \P \ \ a.e. 
\end{align*}

For any $x, y \in \R$, $m \in \cP_2(\R^2)$ with first marginal $\mu \in \cP_2(\R)$,  define 
\begin{align}\label{eq:amin}
\hat{\a}_t(x,y,{\mu})= \argmin_{a \in A} \cH(t,x,{\mu},a,y),
\end{align}
and 
\begin{align*}
& B_c(t, x,y,m):= b(t,x,\mu,\hat{\a}_t(x,y, \mu)), \\
& F_c(t,x,y,m):= \pa_x \cH(t, x, \mu, \hat{\a}_t(x,y, \mu), y)+ \int_{x',y'} \pa_{\mu}\cH(t,x',\mu,\hat{\a}_t(x',y',\mu),y')(x) \, dm(x',y').
\end{align*}
The above discussion connects the infinite horizon mean field control problem to the McKean-Vlasov FBSDE 
\begin{align}\label{eq:mfc}
\begin{cases}
dX_t = B_c(t,X_t, Y_t , \cL(X_t, Y_t)) \, dt + \sigma \, dW_t, \\
dY_t=-F_c(t, X_t, Y_t , \cL(X_t, Y_t))\, dt +Z_t \, dW_t, \\
X_0=\xi. 
\end{cases}
\end{align}
\begin{prop}\label{prop:control}
Let $(b,f)$ be differentiable in $(x,\mu,a)$, Assumption~\ref{assume3} hold and $\cH$ be convex in $(x,\mu,a)$. Suppose $||\hat{\a}_{\cdot}(0,0,\delta_0)||^2_{r} < +\infty$, $\hat{\a}_t$ is Lipschitz and $(B_c, F_c)$ satisfies either Assumption~\ref{assume1} or \ref{assume2} with $K=r$. Then we have that $J(\hat{\a})= \min_{\a} J(\a)$. 
\end{prop}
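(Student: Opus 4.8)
The plan is to prove this as a verification (sufficiency) theorem built on Theorem~\ref{thm1}/\ref{thm2}. Since $(B_c,F_c)$ satisfies Assumption~\ref{assume1} or \ref{assume2} with $K=r$, one of those theorems furnishes a unique solution $(X_t,Y_t,Z_t)\in L^2_r(0,\infty,\R^3)$ of \eqref{eq:mfc}. Set $\hat\a_t:=\hat\a_t(X_t,Y_t,\cL(X_t))$. First I would check $\hat\a\in\mathcal A=L^2_r(0,\infty,A)$: the Lipschitz property of $\hat\a_t$ together with $\|\hat\a_{\cdot}(0,0,\delta_0)\|^2_r<\infty$, $X,Y\in L^2_r$, and $\W_2(\cL(X_t),\delta_0)\le\sqrt{\E[|X_t|^2]}$ bounds $\|\hat\a\|_r$. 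Then the forward equation in \eqref{eq:mfc} reads $dX_t=b(t,X_t,\cL(X_t),\hat\a_t)\,dt+\sigma\,dW_t$, so by uniqueness of solutions to the controlled McKean--Vlasov SDE, $X$ is exactly the state trajectory associated with the admissible control $\hat\a$, and $Y$ plays the role of the adjoint process solving \eqref{eq:BSDEa} along $(X,\hat\a)$.

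Next, fix an arbitrary competitor $\a\in\mathcal A$ with state $X^\a\in L^2_r(0,\infty,\R)$ (the latter as in the proof of Proposition~\ref{prop1} under Assumption~\ref{assume3}), and put $\hat X_t:=X^\a_t-X_t$, so $\hat X_0=0$ and $d\hat X_t=(b(t,\theta^\a_t)-b(t,\theta_t))\,dt$ with $\theta_t=(X_t,\cL(X_t),\hat\a_t)$, $\theta^\a_t=(X^\a_t,\cL(X^\a_t),\a_t)$. Writing $f=\cH-b\cdot y+rxy$ evaluated at $y=Y_t$ and using the joint convexity of $\cH$ in $(x,\mu,a)$ in its $L$-derivative form (with the natural coupling $X_t$ versus $X^\a_t$), together with a Fubini exchange between the base space and the copy space $\wt\Omega$ that recasts the $\pa_\mu\cH$ contribution as the adjoint drift, I would obtain
\begin{align*}
J(\a) - J(\hat\a) \ \ge\ \E\int_0^\infty e^{-rt}\Big[&\big(\pa_x\cH(t,\Theta_t) + \wt\E[\pa_\mu\cH(t,\wt\Theta_t)(X_t)]\big)\hat X_t + \pa_a\cH(t,\Theta_t)(\a_t - \hat\a_t)\\
&+ r\hat X_t Y_t - (b(t,\theta^\a_t) - b(t,\theta_t))Y_t\Big]\,dt .
\end{align*}

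I would then apply It\^o's formula to $e^{-rt}\hat X_t Y_t$, using $dY_t=-(\pa_x\cH(t,\Theta_t)+\wt\E[\pa_\mu\cH(t,\wt\Theta_t)(X_t)])\,dt+Z_t\,dW_t$ and the fact that $\hat X$ has no martingale part; since $\hat X_0=0$, integrating on $[0,T]$ and taking expectations gives $\E[e^{-rT}\hat X_T Y_T]$ equal to the expected integral of $-r\hat X_t Y_t+Y_t(b(t,\theta^\a_t)-b(t,\theta_t))-\hat X_t(\pa_x\cH(t,\Theta_t)+\wt\E[\pa_\mu\cH(t,\wt\Theta_t)(X_t)])$. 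As $\hat X,Y\in L^2_r$ we have $\int_0^\infty e^{-rt}\E[|\hat X_tY_t|]\,dt<\infty$, hence along a subsequence $T_i\to\infty$ the boundary term tends to $0$; substituting the resulting identity into the displayed bound cancels all terms except $\E\int_0^\infty e^{-rt}\pa_a\cH(t,\Theta_t)(\a_t-\hat\a_t)\,dt$. Finally, since $\hat\a_t$ minimizes the convex map $a\mapsto\cH(t,X_t,\cL(X_t),a,Y_t)$ over the convex set $A$, the variational inequality for the minimizer yields $\pa_a\cH(t,\Theta_t)(\a_t-\hat\a_t)\ge0$ pointwise, whence $J(\a)\ge J(\hat\a)$, proving optimality.

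The main obstacle I anticipate is the bookkeeping around the McKean--Vlasov measure-derivative terms: justifying the $L$-convexity inequality for $\cH$ under the correct coupling and performing the Fubini exchange between $\Omega$ and $\wt\Omega$ so that the $\pa_\mu\cH$ contributions appearing in $J(\a)-J(\hat\a)$ and in the adjoint drift match and cancel exactly. Secondary technical points are the standard integrability checks: that $\int_0^t e^{-rs}\hat X_s Z_s\,dW_s$ is a true martingale (handled by localization together with the $L^2_r$ bounds on $\hat X$ and $Z$) and that the transversality term $\E[e^{-rT_i}\hat X_{T_i}Y_{T_i}]$ indeed vanishes along a suitable sequence.
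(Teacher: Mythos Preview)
Your proposal is correct and follows essentially the same route as the paper's proof: obtain $(X,Y,Z)$ from Theorem~\ref{thm1} or \ref{thm2}, rewrite $f$ via $\cH$, apply the $L$-convexity of $\cH$ together with the Fubini swap $\E\wt\E[\pa_\mu\cH(\Theta_t)(\wt X_t)\wt{\hat X}_t]=\E[\hat X_t\,\wt\E[\pa_\mu\cH(\wt\Theta_t)(X_t)]]$, use It\^o on $e^{-rt}\hat X_tY_t$ with the transversality term killed along a subsequence $T_i\to\infty$, and conclude from the first-order optimality condition for $\hat\a_t$. The paper carries out exactly these steps (equations \eqref{thm3:eq1}--\eqref{thm3:eq3}); your added checks that $\hat\a\in\mathcal A$ and that the stochastic integral is a true martingale are routine and consistent with the paper's implicit treatment.
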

The convexity of $\cH$ is described by 
\begin{align*}
\cH(t,x',\mu',a',y)  \geq & \cH(t,x,\mu,a,y) + \pa_x \cH(t,x,\mu,a,y) \cdot (x'-x) \\
&+ \pa_a \cH(t,x,\mu,a,y) \cdot (a'-a) +\wt{\E}\left[\pa_{\mu} \cH(t,x,\mu,a,y) \cdot (\wt{X}'-\wt{X}) \right],
\end{align*}
where $x',x \in \R$, $a', a \in A$, $\mu',\mu \in \cP_2(\R)$, $\wt{X}', \wt{X}$ are defined on $(\wt{\Omega}, \wt{\mathcal{F}}, \wt{\mathbb{P}} )$, and have distributions $\mu',\mu$ respectively. 

Here we adopt the definition of $L$-convex functionals on $\cP_2(\R)$ in \cite[Section 5.5]{MR3752669}. It is equivalent to the well-known displacement convexity when functionals are continuously differentiable on $\cP_2(\R)$, see \cite[Proposition 5.79]{MR3752669}.
\begin{proof}
Due to Theorem~\ref{thm1}, \ref{thm2}, there exists a unique solution $(X,Y,Z)$ to \eqref{eq:mfc}. Let us denote $\theta^\wedge_t:=(X_t, \cL(X_t), \hat{\a}_t(X_t, Y_t, \cL(X_t)))$ and $\Theta^\wedge_t:=(\theta^\wedge_t, Y_t)$. 
For an arbitrary admissible control $\a'$ and its associated process $X'$, we have that 
\begin{align}\label{thm3:eq1}
J(\hat{\a})-J(\a')=& \E \left[\int_0^{\infty}e^{-rt} \left( \cH(t,X_t,\cL(X_t),\hat{\a}_t, Y_t) - \cH(t, X'_t,\cL(X'_t),\a'_t, Y_t)\right) \, dt  \right] \notag \\
&- \E \left[ \int_0^{\infty} e^{-rt} \left(b(t,X_t, \cL(X_t),\hat{X}_t)-b(t,X'_t, \cL(X'_t),\a'_t) \right) \cdot Y_t \, dt \right] \notag \\
&+r \E \left[\int_0^{\infty} e^{-rt} (X_t-X'_t) \cdot Y_t \, dt  \right].
\end{align}
It can be easily seen that there exists a sequence of $T_i \to \infty$ such that $\E\left[e^{-rT_i} (X_{T_i}-X_{T'_t}) \cdot Y_{T_i} \right] \to 0$. Applying It\^{o}'s formula to $e^{-rT_i} (X_{T_i}-X_{T'_t}) \cdot Y_{T_i}$ and letting $T_i \to \infty$, we obtain that 
\begin{align}\label{thm3:eq2}
&\E \left[ \int_0^{\infty} e^{-rt} (X_t-X'_t) \left(\pa_x \cH(t, \Theta^\wedge_t)+\wt{\E}\left[\pa_{\mu} \cH(\wt{\Theta}^\wedge_t)(X_t) \right]\right) \,dt \right] \notag \\
&= \E \left[ \int_0^{\infty} e^{-rt} \left(-r(X_t-X'_t)+b(t,X_t, \cL(X_t),\hat{X}_t)-b(t,X'_t, \cL(X'_t),\a'_t) \right) \cdot Y_t \, dt \right] .
\end{align}
According to the convexity of $\cH$ and the fact that 
$\hat{\a}_t= \argmin_{a \in A} \cH(t,X_t,\cL(X_t),a,Y_t)$, it holds that 
\begin{align}\label{thm3:eq3}
&   \cH(t, X'_t,\cL(X'_t),\a'_t, Y_t)-\cH(t,X_t,\cL(X_t),\hat{\a}_t, Y_t) \notag \\
&\geq (X'_t-X_t) \cdot \pa_x \cH(t,\Theta^\wedge_t) + \wt{\E} \left[\pa_{\mu} \cH(t, \Theta^\wedge_t)(\wt{X}_t) \cdot (\wt{X}'_t-\wt{X}_t )  \right]+(\a'_t-\hat{\a}_t) \cdot \pa_a \cH(t, \Theta^\wedge_t) \notag \\
& \geq (X'_t-X_t) \cdot \pa_x \cH(t,\Theta^\wedge_t) + \wt{\E} \left[\pa_{\mu} \cH(t, \Theta^\wedge_t)(\wt{X}_t)\cdot (\wt{X}'_t-\wt{X}_t )  \right].
\end{align}
By Fubini's theorem, we have that 
\begin{align*}
\E \left[(X'_t-X_t) \cdot \wt{\E}\left[\pa_{\mu} \cH(\wt{\Theta}^\wedge_t)(X_t) \right] \right]= \E \wt{\E} \left[\pa_{\mu} \cH(t, \Theta^\wedge_t)(\wt{X}_t)\cdot (\wt{X}'_t-\wt{X}_t ) \right].
\end{align*}
In conjunction with \eqref{thm3:eq1}, \eqref{thm3:eq2}, \eqref{thm3:eq3}, we conclude that 
\begin{align*}
&J(\hat{\a})-J(\a') \leq  \E \left[\int_0^{\infty}e^{-rt} \left( \cH(t,X_t,\cL(X_t),\hat{\a}_t, Y_t) - \cH(t, X'_t,\cL(X'_t),\a'_t, Y_t)\right) \, dt  \right] \\
&- \E \left[ \int_0^{\infty} e^{-rt} (X_t-X'_t) \left(\pa_x \cH(t, \Theta^\wedge_t)+\wt{\E}\left[\pa_{\mu} \cH(\wt{\Theta}^\wedge_t)(X_t) \right]\right) \,dt \right] \\
 & \leq \E \left[\int_0^{\infty}e^{-rt} \left( \cH(t,X_t,\cL(X_t),\hat{\a}_t, Y_t) - \cH(t, X'_t,\cL(X'_t),\a'_t, Y_t)\right) \, dt  \right] \\
&- \E \left[ \int_0^{\infty} e^{-rt} \left( (X_t-X'_t) \cdot \pa_x \cH(t,\Theta^\wedge_t) + \wt{\E} \left[\pa_{\mu} \cH(t, \Theta^\wedge_t)(\wt{X}_t)\cdot (\wt{X}_t-\wt{X}'_t )  \right] \right) \, dt \right] 
\leq 0. 
\end{align*}
\end{proof}

Now we introduce an infinite horizon mean field game with discounted cost. Suppose there are $N$ players, and each player $i$ has state variable $X_t^i$ at time $t$. Denote the empirical distribution of $N$ players by $\overline{\mu}_t := \frac{1}{N} \sum_{i=1}^N \delta_{X_t^i}$.  Given admissible controls $\a^1, \dotso, \a^N \in \mathcal{A}$ and $N$ independent Brownian motions $W^1 ,\dotso, W^N$, the players have dynamics
\begin{align}
dX_t^i= b(t,X_t^i, \overline{\mu}_t, \alpha^i_t) \, dt+ \sigma  \, dW_t^i, \quad  i=1, \dotso ,N. 
\end{align}
The cost functional for player $i$ is given by 
\begin{align}
J^i(\a^1, \dotso, \a^N):= \E \left[ \int_0^{\infty} e^{-rt} f(t,X_t^i, \overline{\mu}_t, \a^i_t) \, dt \right],
\end{align}
where $r>0$ is the discount factor and $f:\R_+ \times \R \times \cP_2(\R) \times A \to \R$ is the running cost. We want to study the Nash equilibrium as $N \to \infty$. 

Suppose $\ol{\mu}_t$ converges to a measure flow $\mu_t$ in equilibrium as $N \to \infty$. Then a representative player wants to minimize
\begin{align*}
J^{\mu}(\a):=\E \left[  \int_0^{\infty} e^{-rt} f(t,X_t, \mu_t, \a_t) \, dt\right],
\end{align*}
under the constraint 
\begin{align*}
dX_t =b (t,X_t,\mu_t,\a_t) \, dt + \sigma \, dW_t.
\end{align*}
As the variational argument for the mean field type control problem, the optimal strategy of the representative should be given by $\hat{\a}(t, X_t, Y_t, \mu_t)$ where $(X,Y,Z)$ is the solution to 
\begin{align}\label{eq:mfg0}
\begin{cases}
&dX_t =b \left(t,X_t,\mu_t,\hat{\a}_t(X_t,Y_t,\mu_t)\right) \, dt + \sigma \, dW_t, \\
&dY_t =-\pa_x \cH \left(t,X_t,\mu_t,\hat{\a}_t(X_t,Y_t,\mu_t),Y_t\right) \, dt +Z_t \, dW_t, \quad \forall {t \geq 0}, \\
&X_0=\xi.
\end{cases}
\end{align}

 For any $m \in \cP_2(\R^2)$ with first marginal $\mu \in \cP_2(\R)$, define
\begin{align*}
&B_g(t,x,y,m):= b(t,x,\mu,\hat{\a}_t(x,y,\mu)), \\
&F_g(t,x,y,m):=-\pa_x \cH(t,x,\mu,\hat{\a}(x,y,\mu),y).
\end{align*}
It also required that the law of $X_t$ coincides with $\mu_t$. Thus plugging $\mu_t= \cL(X_t)$ in \eqref{eq:mfg0}, we obtain the FBSDE of mean field game
\begin{align}\label{eq:mfg}
\begin{cases}
&dX_t =B_g(t,X_t,Y_t, \cL(X_t,Y_t)) \, dt + \sigma \, dW_t, \\
&dY_t =-F_g(t,X_t,Y_t,\cL(X_t,Y_t))\, dt +Z_t \, dW_t, \quad \forall {t \geq 0}. \\
&X_0=\xi.
\end{cases}
\end{align}

\begin{prop}\label{prop:game}
Let $(b,f)$ be differentiable in $(x,a)$, Assumption~\ref{assume3} hold and $\cH$ be convex in $(x,a)$. Suppose $||\hat{\a}_{\cdot}(0,0,\delta_0)||^2_{r} < +\infty$, $\hat{\a}_t$ is Lipschitz and $(B_g,F_g)$ satisfies either Assumption~\ref{assume1} or \ref{assume2} with $K=r$. Then there exists a unique solution $(X,Y,Z) \in L^2_{r}(0,\infty, \R^3)$ to \eqref{eq:mfg} which provides an equilibrium to the infinite horizon mean field game, i.e., 
\begin{align*}
J^{\cL(X)}(\hat{\a}) \leq J^{\cL(X)}(\a),  \quad \forall \a \in \mathcal{A}. 
\end{align*}
\end{prop}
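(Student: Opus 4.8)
My plan is: first obtain the solution $(X,Y,Z)$ from the well-posedness results of Section~\ref{sec2}, then verify optimality of $\hat{\a}$ against the induced measure flow by a Pontryagin sufficiency argument, the key point being that the equilibrium consistency is built in and requires no separate fixed-point step.

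\textbf{Step 1 (existence, and admissibility of $\hat{\a}$).} Since by hypothesis $(B_g,F_g)$ satisfies Assumption~\ref{assume1} or Assumption~\ref{assume2} with $K=r$, I would simply invoke Theorem~\ref{thm1} or Theorem~\ref{thm2} to get a unique $(X,Y,Z)\in L^2_r(0,\infty,\R^3)$ solving \eqref{eq:mfg}. Fixing this solution and setting $\mu_t:=\cL(X_t)$, the crucial observation is that the constraint $\mu_t=\cL(X_t)$ is already encoded in $(B_g,F_g)$, so $(X,Y,Z)$ is also the solution of the representative player's system \eqref{eq:mfg0} for the frozen flow $\mu$; thus $\hat{\a}_t:=\hat{\a}_t(X_t,Y_t,\mu_t)$, together with $X$, is the state--control pair of the representative player against $\mu$, and no extra consistency argument is needed. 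Because $\hat{\a}_t$ is Lipschitz, $X,Y\in L^2_r$ and $\|\hat{\a}_{\cdot}(0,0,\delta_0)\|^2_r<+\infty$, one has $\hat{\a}\in\mathcal{A}$; and for any competitor $\a'\in\mathcal{A}$ the associated state $X'$ belongs to $L^2_r$ exactly as in the proof of Proposition~\ref{prop1}, so $J^{\mu}(\hat{\a})$ and $J^{\mu}(\a')$ are both finite by the quadratic growth of $f$ in Assumption~\ref{assume3}, just as for the control functional.

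\textbf{Step 2 (verification).} I would run the sufficiency argument of Proposition~\ref{prop:control}, which is genuinely simpler here because the measure argument is frozen and no $\pa_{\mu}\cH$ terms arise. Using $f(t,x,\mu,a)=\cH(t,x,\mu,a,y)-b(t,x,\mu,a)\,y+rxy$ from \eqref{eq:H}, rewrite $J^{\mu}(\hat{\a})-J^{\mu}(\a')=\E\big[\int_0^{\infty}e^{-rt}g_t\,dt\big]$ with
\[
g_t=\cH(t,X_t,\mu_t,\hat{\a}_t,Y_t)-\cH(t,X_t',\mu_t,\a_t',Y_t)-\big(b(t,X_t,\mu_t,\hat{\a}_t)-b(t,X_t',\mu_t,\a_t')\big)Y_t+r(X_t-X_t')Y_t .
\]
Next, choose $T_i\to\infty$ with $\E[e^{-rT_i}(X_{T_i}-X_{T_i}')Y_{T_i}]\to0$ (possible since $X-X',Y\in L^2_r$), apply It\^o's formula to $e^{-rt}(X_t-X_t')Y_t$ on $[0,T_i]$ using $d(X_t-X_t')=\big(b(t,X_t,\mu_t,\hat{\a}_t)-b(t,X_t',\mu_t,\a_t')\big)\,dt$ and $dY_t=-\pa_x\cH(t,X_t,\mu_t,\hat{\a}_t,Y_t)\,dt+Z_t\,dW_t$, take expectations (the $dW$-integral is a true martingale after the usual localization), use $X_0-X_0'=0$, and let $i\to\infty$; this identity lets one replace the two $Y_t$-terms in $g_t$ by $-(X_t-X_t')\,\pa_x\cH(t,X_t,\mu_t,\hat{\a}_t,Y_t)$, giving
\[
J^{\mu}(\hat{\a})-J^{\mu}(\a')=\E\Big[\int_0^{\infty}e^{-rt}\big(\cH(t,X_t,\mu_t,\hat{\a}_t,Y_t)-\cH(t,X_t',\mu_t,\a_t',Y_t)-(X_t-X_t')\,\pa_x\cH(t,X_t,\mu_t,\hat{\a}_t,Y_t)\big)\,dt\Big].
\]
Finally, convexity of $\cH$ in $(x,a)$ gives $\cH(t,X_t',\mu_t,\a_t',Y_t)\geq\cH(t,X_t,\mu_t,\hat{\a}_t,Y_t)+\pa_x\cH(t,X_t,\mu_t,\hat{\a}_t,Y_t)(X_t'-X_t)+\pa_a\cH(t,X_t,\mu_t,\hat{\a}_t,Y_t)(\a_t'-\hat{\a}_t)$, and since $\hat{\a}_t$ minimizes $a\mapsto\cH(t,X_t,\mu_t,a,Y_t)$ over the convex set $A$, the first-order condition yields $\pa_a\cH(t,X_t,\mu_t,\hat{\a}_t,Y_t)(\a_t'-\hat{\a}_t)\geq0$ $\text{Leb}\otimes\P$-a.e.; combining these makes the integrand above pointwise $\leq0$, so $J^{\mu}(\hat{\a})\leq J^{\mu}(\a')$, i.e. $J^{\cL(X)}(\hat{\a})\leq J^{\cL(X)}(\a)$ for all $\a\in\mathcal{A}$.

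\textbf{Expected main obstacle.} I do not anticipate a deep difficulty: the statement is essentially the frozen-measure specialization of Proposition~\ref{prop:control}, and the one genuinely new ingredient is the remark in Step~1 that the equilibrium consistency $\mu_t=\cL(X_t)$ holds automatically by construction of \eqref{eq:mfg}. The items that need care but not real effort are the admissibility $\hat{\a}\in\mathcal{A}$, the finiteness of $J^{\mu}$ on $\mathcal{A}$, the existence of the sequence $T_i$, and the true-martingale property in the It\^o step (standard localization using $X-X',Z\in L^2_r$); the write-up should therefore be short.
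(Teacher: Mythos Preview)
Your proposal is correct and follows the same approach the paper has in mind: the paper's own proof is the one-liner ``Given the existence of solutions to \eqref{eq:mfg}, the proof is standard, see e.g.\ [Carmona--Delarue, Theorem 3.17],'' and what you have written is precisely that standard Pontryagin sufficiency argument spelled out, i.e.\ the frozen-measure specialization of the proof of Proposition~\ref{prop:control} with the $\partial_\mu\cH$ terms removed. Your observations that the consistency $\mu_t=\cL(X_t)$ is built into \eqref{eq:mfg} and that the verification is strictly simpler than in the control case are exactly the points the paper is implicitly relying on.
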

\begin{proof}
Given the existence of solutions to \eqref{eq:mfg}, the proof is standard, see e.g. \cite[Theorem 3.17]{MR3752669}.
\end{proof}

\begin{remark}
In the mean field game, since there are large number of players, any change of a representative player doesn't impact the measure flow $(\mu_t)$. Therefore $(\mu_t)$ is fixed in the derivation of \eqref{eq:mfg0}. That's the main difference from mean field control problem, where the law $\cL(X_t)$ changes as the control changes. For more detailed discussions, see e.g. \cite{MR3045029}. 
\end{remark}

\subsection{Solvability of Mean field type control and Mean field game FBSDEs}\label{subsec3.2}

In this subsection, we find sufficient conditions on the given data for the existence and uniqueness of solutions to \eqref{eq:mfc} and \eqref{eq:mfg}. For the mean field type control problem, we assume that $b(t,x,\mu,a)=b_0(t)+\ol{b}_1(t)\ol{\mu}+b_1(t)x+b_2(t)a$, where $b_0(t), \ol{b}_t(t),b_1(t), b_2(t)$ are deterministic functions. For the mean field game problem, we assume that $b(t,x,\mu,a)=b_0(t,\mu)+b_1(t)x+b_2(t)a$, where by abuse of notation $b_0(t,\cdot)$ is a measurable function of $\mu \in \cP_2(\R)$ for any $t \in \R_+$.  Let us compute $(B_c,F_c)$,
\begin{align}\label{eq:Fc}
B_c(t,x,y,m)=&b_0(t)+\ol{b}_1(t)\ol{\mu}+b_1(t)x+b_2(t) \hat{\a}_t(x,y, \mu), \notag \\
F_c(t,x,y,m)=&b_1(t)y +\pa_x f(t,x,\mu, \hat{\a}_t(x,y,\mu))-ry  \notag \\
&+ \ol{b}_1(t) \ol{\nu}+ \int_{x',y'} \pa_{\mu} f(t,x',\mu,\hat{\a}_t(x',y',\mu))(x) \, dm (x',y'),
\end{align}
where $\mu$ is the first marginal of $m$.
\begin{definition} A continuously differentiable function $\rho:\R \to \R$ is said to be $\eta$-convex for some $\eta>0$ if
\begin{align*}
\rho(z')-\rho(z) -(z'-z)\cdot \pa_z \rho(z) \geq \eta (z'-z)^2, \quad \forall z,z' \in \R.
\end{align*}
\end{definition}
First, we show the Lipschitz and convex property of the minimizer $\hat{\a}_t$ \eqref{eq:amin}. 
\begin{lemma}\label{lem:propertya}
 Suppose $b(t,x,\mu,a)=b_0(t,\mu)+b_1(t)x + b_2(t)a$, $f$ is once continuously differentiable in $(x,a)$, $\eta$-convex in $a$, and  $\pa_a f$ is $l$-Lipschitz in $(\mu,x)$. Then it holds that 
\begin{align}\label{lem4:1}
 |\hat{\a}_t(x,y,\mu)-\hat{\a}_t(x',y',\mu')| \leq  \frac{l}{2\eta} |x'-x|+\frac{|b_2(t)|}{2\eta} |y'-y|+\frac{l}{2 \eta} \W_2(\mu,\mu'),
\end{align}
and for any $(t,x,y,\mu) \in \R_+ \times \R^2 \times \cP_2(\R)$, 
\begin{align}\label{lem4:2}
|\hat{\a}_t(x,y,\mu)| \leq \eta^{-1}(|\pa_a f(t,x,\mu,a_0)|+|b_2(t) y|)+|a_0|.
\end{align}
Furthermore, if $A=\R$ and $\pa_a f$ is $\zeta$-Lipchitz in $a$, it follows that 
\begin{align}\label{lem4:4}
b_2(t)(y'-y) \cdot \left(\hat{\a}_t(x,y',\mu)-\hat{\a}_t(x,y,\mu) \right) \leq -\frac{2b_2(t)^2 \eta}{ \zeta^2 } (y'-y)^2.
\end{align}
\end{lemma}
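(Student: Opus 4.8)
The plan is to characterize the minimizer $\hat{\a}_t(x,y,\mu)$ via the first-order optimality condition for the (strictly convex) function $a \mapsto \cH(t,x,\mu,a,y)$, and then differentiate this implicit relation. Since $b(t,x,\mu,a)=b_0(t,\mu)+b_1(t)x+b_2(t)a$, the Hamiltonian reads $\cH(t,x,\mu,a,y)=(b_0(t,\mu)+b_1(t)x+b_2(t)a)y+f(t,x,\mu,a)-rxy$, so
\begin{align*}
\pa_a \cH(t,x,\mu,a,y)=b_2(t)y+\pa_a f(t,x,\mu,a).
\end{align*}
Because $f$ is $\eta$-convex in $a$, the map $a\mapsto \pa_a\cH$ is strictly increasing with slope at least $2\eta$, so the minimizer is the unique solution $\hat\a=\hat\a_t(x,y,\mu)$ of $b_2(t)y+\pa_a f(t,x,\mu,\hat\a)=0$ (with the obvious projection onto $A$ when $A$ is a proper convex subset; I will first treat $A=\R$ and note that projection onto a convex set is $1$-Lipschitz so the estimates survive).

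First I would prove \eqref{lem4:1}. Writing the optimality conditions at $(x,y,\mu)$ and $(x',y',\mu')$ and subtracting, then pairing with $\hat\a-\hat\a'$, the $\eta$-convexity of $f$ in $a$ gives a lower bound $2\eta(\hat\a-\hat\a')^2 \le (\hat\a-\hat\a')\big(\pa_a f(t,x',\mu',\hat\a')-\pa_a f(t,x,\mu,\hat\a')\big) - b_2(t)(y-y')(\hat\a-\hat\a')$, after which the $l$-Lipschitz property of $\pa_a f$ in $(x,\mu)$ and Cauchy–Schwarz yield $2\eta|\hat\a-\hat\a'|\le l|x-x'|+l\W_2(\mu,\mu')+|b_2(t)||y-y'|$, which is exactly \eqref{lem4:1}. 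For \eqref{lem4:2} I would simply evaluate the optimality condition, use $\eta$-convexity in the form $\eta|\hat\a-a_0|^2\le (\hat\a-a_0)(\pa_a f(t,x,\mu,\hat\a)-\pa_a f(t,x,\mu,a_0)) = (\hat\a-a_0)(-b_2(t)y-\pa_a f(t,x,\mu,a_0))$, and conclude $\eta|\hat\a-a_0|\le |b_2(t)y|+|\pa_a f(t,x,\mu,a_0)|$, giving \eqref{lem4:2} by the triangle inequality.

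For \eqref{lem4:4}, with $A=\R$ the minimizer satisfies $b_2(t)y+\pa_a f(t,x,\mu,\hat\a_t(x,y,\mu))=0$ identically, so $b_2(t)(y'-y)=-(\pa_a f(t,x,\mu,\hat\a')-\pa_a f(t,x,\mu,\hat\a))$ with $\hat\a=\hat\a_t(x,y,\mu)$, $\hat\a'=\hat\a_t(x,y',\mu)$. Hence $b_2(t)(y'-y)(\hat\a'-\hat\a)=-(\hat\a'-\hat\a)\big(\pa_a f(t,x,\mu,\hat\a')-\pa_a f(t,x,\mu,\hat\a)\big)\le -\eta(\hat\a'-\hat\a)^2$ by $\eta$-convexity; on the other hand the $\zeta$-Lipschitz property of $\pa_a f$ in $a$ gives $|b_2(t)(y'-y)|\le \zeta|\hat\a'-\hat\a|$, i.e. $(\hat\a'-\hat\a)^2\ge b_2(t)^2(y'-y)^2/\zeta^2$. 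Combining, $b_2(t)(y'-y)(\hat\a'-\hat\a)\le -\eta b_2(t)^2(y'-y)^2/\zeta^2$; the factor $2$ in \eqref{lem4:4} comes from tracking that the effective strong-convexity constant of $\pa_a\cH$ (equivalently, of $a\mapsto\cH$) is $2\eta$ rather than $\eta$ — this is the one place where care with constants is needed and I would carry the $2\eta$ lower bound consistently from the start. The main (mild) obstacle is the bookkeeping around the constrained case $A\subsetneq \R$: there the identity $\pa_a\cH=0$ becomes a variational inequality, so for \eqref{lem4:4} I would restrict to $A=\R$ as the statement does, while for \eqref{lem4:1}–\eqref{lem4:2} I would invoke nonexpansiveness of the Euclidean projection onto the convex set $A$ to transfer the unconstrained Lipschitz and growth bounds without loss.
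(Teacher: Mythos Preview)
Your proposal is correct and follows essentially the same route as the paper: for \eqref{lem4:1} and \eqref{lem4:2} the paper simply cites \cite[Lemma~3.3, Lemma~6.18]{MR3752669}, whose arguments are exactly the variational-inequality/monotonicity computations you sketch; for \eqref{lem4:4} the paper's proof is the same as yours---subtract the first-order conditions to get $b_2(t)(y'-y)=-(\pa_a f(\hat\a')-\pa_a f(\hat\a))$, use $\eta$-convexity to obtain $2\eta(\hat\a'-\hat\a)^2\le -b_2(t)(y'-y)(\hat\a'-\hat\a)$, and use the $\zeta$-Lipschitz bound to convert $(\hat\a'-\hat\a)^2$ into $(y'-y)^2$. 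Your remark that the constant $2\eta$ (rather than $\eta$) arises from adding the two one-sided convexity inequalities is exactly the point; note also that since here $A\subset\R$ is an interval, the constrained minimizer is indeed the projection of the unconstrained one, so your ``projection is $1$-Lipschitz'' shortcut is legitimate in this one-dimensional setting (though in higher dimensions one would need the variational-inequality argument directly).
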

\begin{proof}
The proofs of \eqref{lem4:1} and \eqref{lem4:2} are from \cite[Lemma 3.3, Lemma 6.18]{MR3752669}. Denote $\hat{\a}_t=\hat{\a}_t(x,y,\mu)$ and $\hat{\a}_t'=\hat{\a}_t(x,y',\mu)$. In the case that $A=\R$, it is clear that $\pa_a \cH(t,x,\mu,\hat{\a}_t,y)= \pa_a \cH(t,x,\mu,\hat{\a}_t',y')=0$, and thus
\begin{align}\label{lem4:5}
b_2(t) (y'-y)+ \left( \pa_a f(t,x,\mu,\hat{\a}_t')- \pa_a f(t,x,\mu,\hat{\a}_t)\right)=0.
\end{align}
Since $f$ is $\eta$-convex in $a$ and $\pa_a f$ in $\zeta$-Lipschitz in $a$, we obtain that
\begin{align*}
\eta (\hat{\a}_t'-\hat{\a}_t)^2 \leq & f(t,x,\mu,\hat{\a}_t')-f(t,x,\mu,\hat{\a}_t)- (\hat{\a}_t'-\hat{\a}_t) \cdot \pa_a f(t,x,\mu,\hat{\a}_t) \\
= &   (\hat{\a}_t'-\hat{\a}_t) \cdot \int_0^1 \pa_a f(t,x, \mu, \hat{\a}_t+s(\hat{\a}_t'-\hat{\a}_t) )- \pa_a f(t,x,\mu,\hat{\a}_t) \, ds 
\leq  \frac{\xi}{2} (\hat{\a}_t'-\hat{\a}_t)^2. 
\end{align*}
For the same reason, we also have
\begin{align*}
 \eta (\hat{\a}_t'-\hat{\a}_t)^2 \leq f(t,x,\mu,\hat{\a}_t)-f(t,x,\mu,\hat{\a}_t')- (\hat{\a}_t-\hat{\a}_t') \cdot \pa_a f(t,x,\mu,\hat{\a}_t')  \leq \frac{\zeta}{2} (\hat{\a}_t'-\hat{\a}_t)^2,
\end{align*}
and therefore 
\begin{align*}
 2  \eta (\hat{\a}_t'-\hat{\a}_t)^2 \leq(\hat{\a}_t'-\hat{\a}_t) \cdot \left( \pa_a f(t,x,\mu,\hat{\a}_t')- \pa_a f(t,x,\mu,\hat{\a}_t)\right)  \leq  \zeta (\hat{\a}_t'-\hat{\a}_t)^2 .
\end{align*}
Multiplying \eqref{lem4:5} by $(\hat{\a}_t'-\hat{\a}_t)$ and using the above inequality, we get that
\begin{align*}
 2  \eta (\hat{\a}_t'-\hat{\a}_t)^2 \leq - b_2(t)(y'-y) \cdot (\hat{\a}_t'-\hat{\a}_t ) \leq  \zeta (\hat{\a}_t'-\hat{\a}_t)^2 ,
\end{align*}
and also 
\begin{align*}
 \frac{|b_2(t)|}{\zeta}|y'-y| \leq |\hat{\a}_t'-\hat{\a}_t| .
\end{align*}
Therefore we conclude that
\begin{align*}
b_2(t)(y'-y) \cdot (\hat{\a}_t'-\hat{\a}_t ) \leq - 2 \eta (\hat{\a}_t'-\hat{\a}_t)^2 \leq -\frac{2b_2(t)^2\eta}{ \zeta^2} (y'-y)^2. 
\end{align*}
\end{proof}

We show that the following function, as a part of $F_c$ \eqref{eq:Fc}, is Lipschitz 
\begin{align*}
\Psi:(t,x,m)  \mapsto  \ol{b}_1(t) \ol{\nu}+ \Phi(t,x,m),
\end{align*}
where 
\begin{align*}
\Phi(t,x,m)=\int_{x',y'} \pa_{\mu} f(t,x',\mu,\hat{\a}_t(x',y',\mu))(x) \, dm (x',y').
\end{align*}

\begin{lemma}\label{lem:lip}
Assume that  $f$ is once continuously differentiable in $(x,\mu,a)$, $\eta$-convex in $a$, $\pa_a f$ is $l$-Lipschitz in $(x,\mu)$, and $\pa_{\mu} f(t,x',\mu,a)(x)$ is $l$-Lipschitz in $(x',\mu,a,x)$. Then for any $x,\ol{x} \in \R$, $m,\ol{m} \in \cP_2(\R^2)$ it holds that 
\begin{align}
|\Psi(t,x,m)- \Psi(t,\ol{x},\ol{m})| \leq  \left(|\ol{b}_1(t)|+\frac{l(4\eta+2l+|b_2(t)|)}{2\eta} \right) \W_2(m,\ol{m})+l|x-\ol{x}|. \label{eq:lipstz}
\end{align}
\end{lemma}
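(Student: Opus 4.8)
The plan is to bound $|\Psi(t,x,m)-\Psi(t,\bar x,\bar m)|$ by splitting it into the contribution of the linear term $\bar b_1(t)\bar\nu$ and the contribution of $\Phi$, then estimating $\Phi$ by coupling. First I would handle the easy term: writing $\bar\nu$ (resp. $\bar\nu'$) for the first moment of the first marginal $\mu$ (resp. $\bar\mu$) of $m$ (resp. $\bar m$), we have $|\bar b_1(t)|\,|\bar\nu-\bar\nu'| \le |\bar b_1(t)|\,\W_1(\mu,\bar\mu) \le |\bar b_1(t)|\,\W_2(\mu,\bar\mu) \le |\bar b_1(t)|\,\W_2(m,\bar m)$, which already produces the $|\bar b_1(t)|$ coefficient in \eqref{eq:lipstz}.

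For the main term I would write, for an arbitrary coupling $\pi$ of $m$ and $\bar m$ (so $\pi$ is a law on $\R^2\times\R^2$ with marginals $m,\bar m$, and whose first-and-third coordinates together couple $\mu,\bar\mu$),
\begin{align*}
\Phi(t,x,m)-\Phi(t,\bar x,\bar m)
= \int \Big( \pa_\mu f(t,x',\mu,\hat\a_t(x',y',\mu))(x) - \pa_\mu f(t,\bar x',\bar\mu,\hat\a_t(\bar x',\bar y',\bar\mu))(\bar x) \Big)\, d\pi.
\end{align*}
Then I would bound the integrand using the hypothesis that $(x',\mu,a,x)\mapsto \pa_\mu f(t,x',\mu,a)(x)$ is $l$-Lipschitz, obtaining
\begin{align*}
\big| \text{integrand} \big|
\le l\Big( |x'-\bar x'| + \W_2(\mu,\bar\mu) + |\hat\a_t(x',y',\mu)-\hat\a_t(\bar x',\bar y',\bar\mu)| + |x-\bar x|\Big).
\end{align*}
Now I invoke Lemma~\ref{lem:propertya}, estimate \eqref{lem4:1}, to replace the $\hat\a_t$-difference by $\tfrac{l}{2\eta}|x'-\bar x'| + \tfrac{|b_2(t)|}{2\eta}|y'-\bar y'| + \tfrac{l}{2\eta}\W_2(\mu,\bar\mu)$. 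Collecting the coefficients of $|x'-\bar x'|$, $|y'-\bar y'|$, $\W_2(\mu,\bar\mu)$ and $|x-\bar x|$, integrating against $\pi$, using Jensen ($\int|x'-\bar x'|\,d\pi \le (\int|x'-\bar x'|^2 d\pi)^{1/2}$, likewise for $|y'-\bar y'|$), and then taking the infimum over couplings $\pi$ so that $(\int|x'-\bar x'|^2\,d\pi)^{1/2} + (\int|y'-\bar y'|^2\,d\pi)^{1/2}$ can be made $\le \sqrt 2\,\W_2(m,\bar m)$ — or, more simply, controlling each of $\int|x'-\bar x'|^2 d\pi$ and $\int|y'-\bar y'|^2 d\pi$ by $\W_2(m,\bar m)^2$ and $\W_2(\mu,\bar\mu)\le\W_2(m,\bar m)$ — I would combine everything to get a coefficient of the form $\big(l\cdot\tfrac{l}{2\eta} + l + l + l\cdot\tfrac{|b_2(t)|}{2\eta}\big)\W_2(m,\bar m) = \tfrac{l(4\eta+2l+|b_2(t)|)}{2\eta}\W_2(m,\bar m)$, plus the $l|x-\bar x|$ term, plus the $|\bar b_1(t)|\W_2(m,\bar m)$ from the first step. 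Adding these matches \eqref{eq:lipstz}.

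The main obstacle is bookkeeping rather than conceptual: one must be careful that $\W_2(\mu,\bar\mu)\le\W_2(m,\bar m)$ (projection is $1$-Lipschitz for $\W_2$), and that the coupling $\pi$ of $m,\bar m$ simultaneously couples $\mu,\bar\mu$ through the appropriate coordinates, so that the same $\pi$ controls all four difference terms at once; then passing to the infimum over $\pi$ is what converts the integrated bound into the stated $\W_2(m,\bar m)$-Lipschitz estimate. A minor point is that $\hat\a_t$ is well-defined and the estimate \eqref{lem4:1} applies, which is guaranteed by the standing convexity hypotheses on $f$ in $a$; all of this is available from Lemma~\ref{lem:propertya} and requires no new argument.
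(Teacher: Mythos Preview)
Your proof is correct and follows essentially the same route as the paper: both arguments use the $l$-Lipschitz property of $\pa_\mu f$ combined with the $\hat\a_t$-Lipschitz estimate \eqref{lem4:1}, and both control the $\Phi$-difference by coupling $m$ and $\bar m$ (the paper simply organizes this as a triangle-inequality splitting $\Phi(t,x,m)-\Phi(t,\bar x,m)$ plus $\Phi(t,\bar x,m)-\Phi(t,\bar x,\bar m)$ rather than a single integral over $\pi$). Two minor bookkeeping points: $\nu$ is the \emph{second} marginal of $m$, not the first (this does not affect the bound $|\bar\nu-\bar\nu'|\le\W_2(m,\bar m)$), and your displayed intermediate sum of coefficients omits one $l\cdot\tfrac{l}{2\eta}$ term --- the one coming from the $\tfrac{l}{2\eta}\W_2(\mu,\bar\mu)$ piece of the $\hat\a_t$ estimate --- though your stated final constant $\tfrac{l(4\eta+2l+|b_2(t)|)}{2\eta}$ is correct once this is restored.
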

\begin{proof}
Together with Lemma~\ref{lem:propertya}, we have the Lipschitz property 
\begin{align*}
& \left| \pa_{\mu} f(t,x',\mu,\hat{\a}_t(x',y',\mu))(\ol{x})-\pa_{\mu} f(t,\ol{x}',\mu,\hat{\a}_t(\ol{x}',\ol{y}',\mu))(\ol{x}) \right| \\
& \quad \quad \quad \leq l\left( |x'-\ol{x}'|+|\hat{\a}_t(x',y',\mu)-\hat{\a}_t(\ol{x}',\ol{y}',\mu)| \right) \\
& \quad \quad \quad \leq l\left(|x'-\ol{x}'|+\frac{l}{2\eta} |x'-\ol{x}'|+\frac{|b_2(t)|}{2\eta}|y'-\ol{y}'|\right).
\end{align*}
Therefore it holds that 
\begin{align*}
\left|\int_{x',y'} \pa_{\mu} f(t,x',\mu,\hat{\a}_t(x',y',\mu))(\ol{x}) \, d(m-\ol{m}) (x',y') \right| \leq \frac{l(2\eta+l+|b_2(t)|)}{2\eta} \W_2(m,\ol{m}),
\end{align*}
and hence
\begin{align*}
|\Psi(t,x,m)-& \Psi(t,\ol{x},\ol{m})| \leq |\ol{b}_1(t)| \W_1(\nu,\nu')+ |\Phi(t,x,m)-\Phi(t,\ol{x},m)|+|\Phi(t,\ol{x},m)-\Phi(t,\ol{x},\ol{m})| \notag \\
\leq & |\ol{b}_1(t)| \W_2(m,\ol{m})+l|x-\ol{x}|+\left|\int_{x',y'} \pa_{\mu} f(t,x',\mu,\hat{\a}_t(x',y',\mu))(\ol{x}) \, d(m-\ol{m}) (x',y') \right| \notag \\
&+ \int_{x',y'} \left| \pa_{\mu} f(t,x',\mu,\hat{\a}_t(x',y',\mu))(\ol{x})-\pa_{\mu} f(t,x',\ol{\mu},\hat{\a}_t(x',y', \ol{\mu}))(\ol{x})\right| \, d\ol{m} (x',y')  \notag \\
\leq & \left(|\ol{b}_1(t)|+\frac{l(4\eta+2l+|b_2(t)|)}{2\eta} \right) \W_2(m,\ol{m})+l|x-\ol{x}|. 
\end{align*}
\end{proof}

\begin{remark}
\cite[Lemma 5.41]{MR3752669} provides a sufficient condition for the Lipschitz property of $$(x',\mu,a,x) \mapsto \pa_{\mu}f(t,x',\mu,a)(x).$$
\end{remark}

\begin{thm}\label{thm3.1}
Let $b(t,x,\mu,a)=b_0(t)+\ol{b}_1(t)\ol{\mu}+b_1(t)x+b_2(t)a$. The conclusion of Proposition~\ref{prop:control} holds under either conditions $(i), (ii), (iii), (iv)$ or conditions $(i'), (ii'), (iii'), (iv')$ below, and thus $\hat{\a}$ solves the minimization problem~\eqref{controlproblem}.
\begin{enumerate}[(i)]
 \item  $b_1(t),b_2(t)$ are uniformly bounded, and there  exists a positive constant $l$ such that $|\ol{b}_1(t)| \leq l$ and $-\max_{t} b_1(t) \geq l- \frac{r}{2}$. $f$ is once continuously differentiable in $(x,\mu,a)$, of at most quadratic growth in $(x,\mu,a)$, and it holds that $b_0(\cdot), |f(\cdot,0,\delta_0,a)|^{1/2} \in L^2_{r}(0,\infty, \R)$ for some (any thus any) $a \in A$. 
\item There exist some positive constants $\eta,\iota$ such that the following convexity condition holds 
\begin{align*}
f(t,x',\mu',a')-f(t,x,\mu,a)& -\pa_{(x,a)}f(t,x,\mu,a) \cdot ( x'-x,a'-a) \\
& - \E\left[\pa_{\mu}f(t,x,\mu,a)(X) \cdot (X'-X) \right] \geq \iota (x'-x)^2+\eta(a'-a)^2,
\end{align*}
for any $t \in \R_+$ whenever $X',X$ have distributions $\mu',\mu$ respectively. 
\item $\pa_x f$ and $\pa_a f$ are $l$-Lipschitz in $(\mu,a)$ and $(x,\mu)$ respectively. $\pa_a f$ is $\zeta$-Lipschitz in $a$, and $\pa_{\mu} f(t,x',\mu,a)(x)$ is $l$-Lipschitz in $(x',\mu,a,x)$. 
\item $A=\R$, and it holds that 
\begin{align}\label{eq:condition1}
\inf_t \min &\left\{2 \iota-\frac{13l}{2}-\frac{5l^2+3|b_2(t)|l}{2\eta},  \frac{2b_2(t)^2\eta}{\zeta^2}-\frac{3l}{2}-\frac{l^2+2|b_2(t)|l}{2\eta} \right\} > \frac{r}{2}.
\end{align}
\end{enumerate}
\vspace{8pt}

\begin{enumerate}[(i')]
\item $b_1(t),b_2(t)$ are uniformly bounded, and there  exists a positive constant $l$ such that $|\ol{b}_1(t)| \leq l$. $f$ is once continuously differentiable in $(x,\mu,a)$, of at most quadratic growth in $(x,\mu,a)$, and it holds that $b(\cdot), |f(\cdot,0,\delta_0,a)|^{1/2} \in L^2_{r}(0,\infty, \R)$ for some (any thus any) $a \in A$. 
\item There exists a positive constant $\eta$ such that the following convexity condition holds 
\begin{align*}
f(t,x',\mu',a')-f(t,x,\mu,a)& -\pa_{(x,a)}f(t,x,\mu,a) \cdot ( x'-x,a'-a) \\
& - \E\left[\pa_{\mu}f(t,x,\mu,a)(X) \cdot (X'-X) \right] \geq \eta(a'-a)^2,
\end{align*}
for any $t \in \R_+$ whenever $X',X$ have distributions $\mu',\mu$ respectively. 
\item $\pa_x f$ and $\pa_a f$ are $l$-Lipschitz in $(x,\mu,a)$ and $(x,\mu)$ respectively. $\pa_{\mu} f(t,x',\mu,a)(x)$ is $l$-Lipschitz in $(x',\mu,a,x)$. 
\item It holds that 
\begin{align}\label{eq:condition2}
\max_t b_1(t) \leq - \max  \left\{9l-\frac{r}{2}+\max_t\frac{9l^2+4l|b_2(t)|}{2\eta},  3l-\frac{r}{2} +\max_t \frac{4|b_2(t)|l+3b_2(t)^2}{2\eta} \right\}.
\end{align}

\end{enumerate}
\end{thm}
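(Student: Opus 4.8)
The plan is to verify that the coefficients $(B_c, F_c)$ given in \eqref{eq:Fc} satisfy the hypotheses of Proposition~\ref{prop:control}---namely $\|\hat{\a}_\cdot(0,0,\delta_0)\|^2_r < +\infty$, $\hat{\a}_t$ Lipschitz, and either Assumption~\ref{assume1} or Assumption~\ref{assume2} with $K = r$---under each of the two packages of conditions, and then simply invoke Proposition~\ref{prop:control}. First I would record that under (i) (resp. (i')), (ii) (resp. (ii')), (iii) (resp. (iii')) the function $f$ is $\eta$-convex in $a$ with $\pa_a f$ Lipschitz in all arguments, so Lemma~\ref{lem:propertya} applies: this immediately gives the Lipschitz bound \eqref{lem4:1} on $\hat{\a}_t$, the growth bound \eqref{lem4:2} which together with $|f(\cdot,0,\delta_0,a)|^{1/2}, b_0(\cdot) \in L^2_r$ yields $\|\hat{\a}_\cdot(0,0,\delta_0)\|^2_r < +\infty$, and (in case (i)--(iv), where $A = \R$ and $\pa_a f$ is $\zeta$-Lipschitz in $a$) the one-sided monotonicity \eqref{lem4:4} of $y \mapsto b_2(t)\hat{\a}_t(x,y,\mu)$. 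Next, Lemma~\ref{lem:lip} shows the mean-field part $\Psi$ of $F_c$ is Lipschitz in $(x,m)$ with the explicit constant in \eqref{eq:lipstz}; combining this with the Lipschitz bounds on $\pa_x f$, on $\hat{\a}_t$, and the boundedness of $b_1, b_2$, one reads off from \eqref{eq:Fc} that both $B_c$ and $F_c$ are globally Lipschitz in $(x,y,m)$ with constants expressed through $l, \eta, \zeta, \sup_t|b_1(t)|, \sup_t|b_2(t)|$ and $r$.

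The core of the argument is the monotonicity check. For the package (i)--(iv), I would verify Assumption~\ref{assume1}: fixing $\hat X = X - X'$, $\hat Y = Y - Y'$ and $U = (X,Y,\cL(X,Y))$, $U' = (X',Y',\cL(X',Y'))$, expand
\[
\E\!\left[-r\,\hat X\hat Y - \hat X\big(F_c(t,U) - F_c(t,U')\big) + \hat Y\big(B_c(t,U) - B_c(t,U')\big)\right]
\]
term by term using \eqref{eq:Fc}. The diagonal $b_1(t)\hat X\hat Y$ contributions cancel against $-r\hat X\hat Y$ partly; the $\pa_x\cH$-type term produces $-\iota\,\E[\hat X^2]$ from $\iota$-convexity; the $b_2(t)\hat\a_t$ term inside $B_c$ paired with $\hat Y$ gives $-\tfrac{2b_2(t)^2\eta}{\zeta^2}\E[\hat Y^2]$ by \eqref{lem4:4}; and the remaining cross terms---the $\ol b_1(t)$ mean terms, the $\hat X$-dependence of $\hat\a_t$ inside $B_c$, the $\Psi$-Lipschitz terms, the $\pa_a f$-Lipschitz contributions to $\pa_x\cH$---are each bounded in absolute value by $l$ or $\tfrac{l}{2\eta}$-type constants times $\E[\hat X^2] + \E[\hat Y^2]$ via Cauchy--Schwarz and $\W_2(\cL(X,Y),\cL(X',Y'))^2 \le 2\E[\hat X^2] + 2\E[\hat Y^2]$. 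Collecting all coefficients, the whole expression is bounded above by $-\k\,\E[\hat X^2 + \hat Y^2]$ with
\[
\k = \inf_t \min\Big\{\iota - \tfrac{13l}{4} - \tfrac{5l^2 + 3|b_2(t)|l}{4\eta},\ \tfrac{b_2(t)^2\eta}{\zeta^2} - \tfrac{3l}{4} - \tfrac{l^2 + 2|b_2(t)|l}{4\eta}\Big\},
\]
and condition \eqref{eq:condition1} is exactly what forces $2\k > r$, i.e. $r < 2\k$, so Assumption~\ref{assume1}(ii) holds with this $\k$; Assumption~\ref{assume1}(i) is the Lipschitz property already established. For the primed package, I would instead verify Assumption~\ref{assume2}: set $\k_1 = -\max_t b_1(t) + r - \big(\text{Lipschitz corrections of }F_g\text{ in }y\big)$ coming from the $b_1(t)y - ry$ structure plus the $\pa_a f$ terms, set $\k_2 = -\max_t b_1(t) - \big(\text{corrections}\big)$ from the forward drift, take $l_1, l_2$ as the split Lipschitz constants of $F_c, B_c$ read off from \eqref{eq:Fc} and \eqref{eq:lipstz}, and choose $\e_1 = \e_2 = 1$; then \eqref{eq:condition2} is precisely the inequality ensuring $-2\k_2 + 2l_2 + 2l_2\e_2 < r < 2\k_1 - 2l_1 - 2l_1\e_1$ and the product condition $4l_1l_2 \le \e_1\e_2(\cdots)(\cdots)$ of Assumption~\ref{assume2}(iii), while (iv) follows from (iv'). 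Finally I would note that $b$ being affine in $(x,\mu,a)$ makes $\cH$ convex in $(x,\mu,a)$ as soon as $f$ is, so the convexity hypothesis of Proposition~\ref{prop:control} is inherited from (ii) (resp. (ii')), and the conclusion $J(\hat\a) = \min_\a J(\a)$ follows.

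The main obstacle is purely bookkeeping: correctly tracking every cross term in the monotonicity expansion---especially the terms where $\hat\a_t$'s dependence on $x$ (with constant $\tfrac{l}{2\eta}$) and on $y$ (with constant $\tfrac{|b_2(t)|}{2\eta}$) feeds through $b_2(t)$ into $B_c$, and the terms where $\pa_a f$'s Lipschitz constant $l$ in $(x,\mu)$ feeds through $\pa_{xa}\cH$ into $F_c$---and pairing the $\e$-weighted Young's inequality splits so that the accumulated coefficients reproduce exactly the numerical constants $\tfrac{13l}{2}$, $\tfrac{5l^2 + 3|b_2|l}{2\eta}$, $\tfrac{9l^2 + 4l|b_2|}{2\eta}$, etc., appearing in \eqref{eq:condition1} and \eqref{eq:condition2}. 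I expect no conceptual difficulty beyond this, since the existence/uniqueness machinery (Theorems~\ref{thm1}, \ref{thm2}) and the verification theorem (Proposition~\ref{prop:control}) are already in hand; the work is to show the explicit affine-drift coefficients land inside one of the two abstract assumption sets with $K = r$.
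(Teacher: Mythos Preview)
Your approach is essentially identical to the paper's: both use Lemmas~\ref{lem:propertya} and~\ref{lem:lip} to handle Lipschitz continuity of $\hat\alpha_t$ and of the mean-field term $\Psi$, then verify Assumption~\ref{assume1} under (i)--(iv) by expanding the monotonicity expression term by term (using $\iota$-convexity of $f$ in $x$ and the one-sided bound \eqref{lem4:4}), and verify Assumption~\ref{assume2} under (i$'$)--(iv$'$) by reading off $\kappa_1,\kappa_2,l_1,l_2$ directly from \eqref{eq:Fc}, before invoking Proposition~\ref{prop:control}.

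Two small numerical points where your bookkeeping drifts from the paper's (and which you should correct when you carry out the details you yourself flag as the obstacle): first, $\iota$-convexity of $f$ in the sense of condition~(ii) gives $-\hat X(\pa_x f(X,\ldots)-\pa_x f(X',\ldots))\le -2\iota\,\hat X^2$, not $-\iota\,\hat X^2$, so your displayed $\kappa$ is exactly half of what the computation actually yields; with the correct factor, condition~\eqref{eq:condition1} reads $\kappa>r/2$, which is precisely $r<2\kappa$ as Assumption~\ref{assume1}(ii) requires. Second, in the primed case the paper's verification $-2\kappa_1+6l_1<-r<2\kappa_2-6l_2$ corresponds to $\epsilon_1=\epsilon_2=2$ in Assumption~\ref{assume2}(iii), not $\epsilon_1=\epsilon_2=1$ as you state; with that choice the first inequality in (iii) is what \eqref{eq:condition2} encodes, and the product inequality then follows.
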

\begin{proof}
Assume that conditions $(i), (ii), (iii), (iv)$ hold. It is clear that Assumption~\ref{assume3} is satisfied, and due to Lemma~\ref{lem:propertya} $\hat{\a}_t$ is Lipschitz and $\hat{\a}_{\cdot}(0,0,\delta_0) \in L^2_{r}(0,\infty, \R)$. According to condition $(ii)$, it can be easily seen that $\cH$ is convex in $(x,\mu,a)$. By Lemma~\ref{lem:lip} and explicit formulas of $(B_c,F_c)$ \eqref{eq:Fc}, Assumption~\ref{assume1} (i) can be easily verified. It remains to to check Assumption~\ref{assume1} (ii) with $K=r$. 

Take any square integrable random variables $X,Y,X',Y'$, and denote $\mu=\cL(X), \mu'=\cL(X'), m=\cL(X,Y), m'=\cL(X',Y')$. Define $\hat{X}=X-X', \hat{Y}=Y-Y'$ and $U=(X, Y , \cL(X,Y)), U'=(X', Y', \cL(X',Y')).$ Let us compute 
\begin{align}\label{thm3.1:eq1}
-r \hat{X} \hat{Y}&- \hat{X} (F_c(t,U)-F_c(t,U') )+\hat{Y} (B_c(t,U)-B_c(t,U') ) \notag \\
=&-\hat{X}\left(\pa_x f(t,X,\mu,\hat{\a}_t(X,Y,\mu))- \pa_x f(t,X',\mu',\hat{\a}_t(X',Y',\mu')) +\Psi(X,m)-\Psi(X',m')\right) \notag \\
&+ \hat{Y} \left(\ol{b}_1(t) \E[\hat{X}]+b_2(t) (\hat{\a}_t(X,Y,\mu)-\hat{\a}_t(X',Y',\mu')) \right).
\end{align}
Since $f$ is $\iota$-convex in $x$, we have that 
\begin{align}\label{thm3.1:eq2}
-\hat{X}&\left(\pa_x f(t,X,\mu,\hat{\a}_t(X,Y,\mu))- \pa_x f(t,X',\mu',\hat{\a}_t(X',Y',\mu')) \right) \notag \\
=&  -\hat{X}\left(\pa_x f(t,X,\mu,\hat{\a}_t(X,Y,\mu))- \pa_x f(t,X',\mu,\hat{\a}_t(X,Y,\mu)) \right) \notag \\
&-\hat{X}\left(\pa_x f(t,X',\mu,\hat{\a}_t(X,Y,\mu))- \pa_x f(t,X',\mu',\hat{\a}_t(X',Y',\mu')) \right) \notag \\
\leq &- 2 \iota \hat{X}^2+l |\hat{X}|\left(\W_2(\mu,\mu')+\frac{l}{2\eta} |\hat{X}|+\frac{|b_2(t)|}{2\eta} |\hat{Y}|+\frac{l}{2 \eta} \W_2(\mu,\mu')\right).
\end{align}
According to \eqref{lem4:4}, it follows that 
\begin{align}\label{thm3.1:eq3}
\hat{Y} b_2(t) &\left(\hat{\a}_t(X,Y,\mu)-\hat{\a}_t(X',Y',\mu')\right) \notag\\
=&\hat{Y} b_2(t) \left(\hat{\a}_t(X,Y,\mu)-\hat{\a}_t(X,Y',\mu)\right) +\hat{Y} b_2(t) \left(\hat{\a}_t(X,Y',\mu)-\hat{\a}_t(X',Y',\mu')\right) \notag\\
\leq & -\frac{2b_2(t)^2 \eta}{ \zeta^2 } \hat{Y}^2+|\hat{Y} b_2(t)|\left(\frac{l}{2\eta} |\hat{X}|+\frac{l}{2 \eta} \W_2(\mu,\mu') \right).
\end{align}
Using Lemma~\ref{lem:lip}, equations \eqref{thm3.1:eq1},\eqref{thm3.1:eq2},\eqref{thm3.1:eq3}, condition $(iv)$ and basic inequalities, Assumption~\ref{assume1} (ii) can be verified.

Assume that conditions $(i'), (ii'), (iii'), (iv')$ hold. We only check Assumption~\ref{assume2}, and the rest is very similar to the first part of proof. Recalling the formula \eqref{eq:Fc}, it can be easily verified that 
\begin{align*}
&(y-y')\left(F_c(t,x,y,m)-F_c(t,x,y',m) \right) \leq \left( b_1(t)-r+\frac{|b_2(t)|l}{2\eta} \right) (y-y')^2, \\
&(x-x')\left(B_c(t,x,y,m)-B_c(t,x',y,m) \right) \leq \left(b_1(t)+\frac{|b_2(t)|l}{2\eta} \right)(x-x')^2, \\
&|F_c(t,x,y,m)-F_c(t,x',y,m')|  \leq \left(3l+\frac{3l^2+|b_2(t)|l}{2 \eta} \right)\W_2(m,m')+\left( 2l+\frac{l^2}{2\eta}\right)|x-x'|,\\
&|B_c(t,x,y,m)-B_c(t,x,y',m')|  \leq  \left( l+\frac{|b_2(t)|l}{2\eta} \right) \W_2(m,m')+\frac{b_2(t)^2}{2\eta}|y-y'|.
\end{align*}
Therefore we define 
\begin{align*}
\k_1&=-\max_t \left( b_1(t)-r+\frac{|b_2(t)|l}{2\eta} \right), \\
\k_2&=-\max_t \left(b_1(t)+\frac{|b_2(t)|l}{2\eta} \right), \\
l_1&=\max_t \left(3l+\frac{3l^2+|b_2(t)|l}{2 \eta} \right), \\
l_2&=\max_t \left( l+\frac{|b_2(t)|l}{2\eta}+\frac{b_2(t)^2}{2 \eta}\right). 
\end{align*}
Due to condition $(iv')$, it can be easily verified that 
\begin{align*}
-2\k_1+6l_1 <-r<2\k_2-6l_2,
\end{align*}
and hence Assumption~\ref{assume2} (iii) is satisfied. 
\end{proof}

Now we provide sufficient conditions to solve \eqref{eq:mfg}. Assume that $b(t,x,\mu,a)=b_0(t,\mu)+b_1(t)x+b_2(t)a$. Then it is clear that 
\begin{align}\label{eq:coeffmfg}
B_g(t,x,y,\mu)=&b_0(t,\mu)+b_1(t)x+b_2(t) \hat{\a}_t(x,y, \mu), \notag\\
F_g(t,x,y,\mu)=&b_1(t)y +\pa_x f(t,x,\mu, \hat{\a}_t(x,y,\mu))-ry. 
\end{align}

\begin{thm}\label{thm3.2}
Let $b(t,x,\mu,a)=b_0(t,\mu)+b_1(t)x+b_2(t)a$. The conclusion of Proposition~\ref{prop:game} holds under either conditions $(i), (ii), (iii), (iv)$ or conditions $(i'), (ii'), (iii'), (iv')$ below, and thus $(\cL(X_t), \hat{\a}_t)$ solves the infinite horizon mean field game. 
\begin{enumerate}[(i)]
\item $b_1(t),b_2(t)$ are uniformly bounded, and $b_0(t,\mu)$ is $l$-Lipschitz in $\mu$, such that $-\max_{t} b_1(t) \geq l- \frac{r}{2}$. $f$  is once continuously differentiable in $(x,a)$, of at most quadratic growth in $(x,\mu,a)$, and it holds that $b(\cdot, \delta_0), |f(\cdot,0,\delta_0,a)|^{1/2} \in L^2_{r}(0,\infty, \R)$ for some (any thus any) $a \in A$. 
\item $f$ is $\iota$-convex in $x$ and $\eta$-convex in $a$.  
\item $\pa_x f$ and $\pa_a f$ are $l$-Lipschitz in $(\mu,a)$ and $(x,\mu)$ respectively. $\pa_a f$ is $\zeta$-Lipschitz in $a$.
\item $A=\R$ and it holds that
\begin{align}\label{eq:condition3}
\inf_t \min \left\{2\iota - \frac{3l}{2}-\frac{l^2}{\eta}-\frac{3|b_2(t)|l}{4 \eta}, \frac{2 b_2(t)^2 \eta}{ \zeta^2}-\frac{l}{2} -\frac{3|b_2(t)|l}{4\eta} \right\} \geq \frac{r}{2}.
\end{align}
\end{enumerate}
\vspace{8pt}
\begin{enumerate}[(i')]
\item $b_1(t),b_2(t)$ are uniformly bounded, and $b_0(t,\mu)$ is $l$-Lipschitz in $\mu$. $f$  is once continuously differentiable in $(x,a)$, of at most quadratic growth in $(x,\mu,a)$, and it holds that $b(\cdot, \delta_0)$, $|f(\cdot,0,\delta_0,a)|^{1/2} \in L^2_{r}(0,\infty, \R)$ for some (any thus any) $a \in A$. 
\item $f$ is $\eta$-convex in $a$, convex $x$.
\item $\pa_x f$ is $l$-Lipschitz in $(x,\mu,a)$, and $\pa_a f $ is $l$-Lipschitz in $(x,\mu)$. 
\item It holds that 
\begin{align}\label{eq:condition4}
\max_t b_1(t) \leq - \max  \left\{3l-\frac{r}{2}+\max_t\frac{3l^2+|b_2(t)|l}{2\eta},  3l-\frac{r}{2} +\max_t \frac{4|b_2(t)|l+3b_2(t)^2}{2\eta} \right\}.
\end{align}
\end{enumerate}
\end{thm}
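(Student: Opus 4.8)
The plan is to reduce Theorem~\ref{thm3.2} to Proposition~\ref{prop:game}: for each of the two sets of conditions I would check that the data $(b,f)$ and the coefficients $(B_g,F_g)$ written explicitly in \eqref{eq:coeffmfg} satisfy every hypothesis of that proposition --- differentiability of $(b,f)$ in $(x,a)$, Assumption~\ref{assume3}, convexity of $\cH$ in $(x,a)$, $\Vert\hat{\a}_{\cdot}(0,0,\delta_0)\Vert^2_{r}<\infty$, Lipschitz continuity of $\hat{\a}_t$, and, crucially, that $(B_g,F_g)$ obeys Assumption~\ref{assume1} under $(i)$--$(iv)$ or Assumption~\ref{assume2} under $(i')$--$(iv')$ with $K=r$. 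The argument runs parallel to the proof of Theorem~\ref{thm3.1} but is lighter, because in the mean field game the measure flow is frozen and $F_g$ carries no $\pa_{\mu}f$ term. I would first dispose of the common items. Since $\cH(t,x,\mu,a,y)=(b_0(t,\mu)+b_1(t)x+b_2(t)a)y+f(t,x,\mu,a)-rxy$ is, for fixed $(\mu,y)$, the sum of an affine function of $(x,a)$ and of $f$, condition $(ii)$ (resp.\ $(ii')$) makes it convex in $(x,a)$. Lemma~\ref{lem:propertya}, applied with the present $b$, yields the Lipschitz estimate \eqref{lem4:1} for $\hat{\a}_t$, the growth bound \eqref{lem4:2} --- whence $\hat{\a}_{\cdot}(0,0,\delta_0)\in L^2_{r}(0,\infty,\R)$ together with the integrability hypotheses in $(i)$ (resp.\ $(i')$) --- and, when $A=\R$, the dissipativity \eqref{lem4:4}; and Assumption~\ref{assume3} follows from $(i)$--$(iii)$ (resp.\ $(i')$--$(iii')$) with $\k=-\max_t b_1(t)$.

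\textbf{First set of conditions: verifying Assumption~\ref{assume1}.} Part (i) is immediate from \eqref{eq:coeffmfg}, the $l$-Lipschitz continuity of $b_0(t,\cdot)$ and of $\pa_x f$, and \eqref{lem4:1} (after enlarging the common Lipschitz constant). For part (ii) with $K=r$, writing $\hat{X}=X-X'$, $\hat{Y}=Y-Y'$, $\mu=\cL(X)$, $\mu'=\cL(X')$, I would expand
\begin{align*}
-r\hat{X}\hat{Y}-\hat{X}\bigl(F_g(t,U)-F_g(t,U')\bigr)+\hat{Y}\bigl(B_g(t,U)-B_g(t,U')\bigr)
\end{align*}
and observe that $-r\hat{X}\hat{Y}$ cancels the contribution $-\hat{X}(-r\hat{Y})$ coming from the $-ry$ part of $F_g$, while the two $b_1(t)$ cross-terms cancel, leaving only
\begin{align*}
&-\hat{X}\bigl(\pa_x f(t,X,\mu,\hat{\a}_t(X,Y,\mu))-\pa_x f(t,X',\mu',\hat{\a}_t(X',Y',\mu'))\bigr)\\
&\quad+\hat{Y}\bigl(b_0(t,\mu)-b_0(t,\mu')\bigr)+b_2(t)\hat{Y}\bigl(\hat{\a}_t(X,Y,\mu)-\hat{\a}_t(X',Y',\mu')\bigr).
\end{align*}
I would control the first line by isolating its diagonal-in-$x$ part with the $\iota$-convexity of $f$ in $x$ and estimating the remainder with the $l$-Lipschitz continuity of $\pa_x f$ and \eqref{lem4:1}; the $b_0$ term by $|b_0(t,\mu)-b_0(t,\mu')|\le l\,\W_2(\mu,\mu')\le l\sqrt{\E[\hat{X}^2]}$; and the last term by adding and subtracting $\hat{\a}_t(X,Y',\mu)$, applying \eqref{lem4:4} to the pure-$Y$ increment and \eqref{lem4:1} to the rest. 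Taking expectations, using $\W_2(\cL(X),\cL(X'))\le\sqrt{\E[\hat{X}^2]}$ and Young's inequality, one reaches a bound of the form $-\k\,\E[\hat{X}^2+\hat{Y}^2]$ whose constant $\k$ is governed by the minimum in \eqref{eq:condition3}; condition $(iv)$ then supplies the monotonicity required by Assumption~\ref{assume1}(ii), and Theorem~\ref{thm1} applies.

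\textbf{Second set of conditions: verifying Assumption~\ref{assume2}.} From \eqref{eq:coeffmfg}, \eqref{lem4:1} and \eqref{lem4:4} one reads off the one-sided estimates $(y-y')\bigl(F_g(t,x,y,\mu)-F_g(t,x,y',\mu)\bigr)\le\bigl(b_1(t)-r+\frac{|b_2(t)|l}{2\eta}\bigr)(y-y')^2$ and $(x-x')\bigl(B_g(t,x,y,\mu)-B_g(t,x',y,\mu)\bigr)\le\bigl(b_1(t)+\frac{|b_2(t)|l}{2\eta}\bigr)(x-x')^2$, so one sets $\k_1=-\max_t\bigl(b_1(t)-r+\frac{|b_2(t)|l}{2\eta}\bigr)$ and $\k_2=-\max_t\bigl(b_1(t)+\frac{|b_2(t)|l}{2\eta}\bigr)$; the Lipschitz constants $l_1$ of $F_g$ in $(x,\mu)$ and $l_2$ of $B_g$ in $(y,\mu)$ are obtained the same way from the $l$-Lipschitz continuity of $\pa_x f$, $\pa_a f$, $b_0(t,\cdot)$ and from \eqref{lem4:1}. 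Then Assumption~\ref{assume2}(i)--(ii) are immediate, (iv) follows from the $L^2_{r}$-integrability in $(i')$ together with \eqref{lem4:2}, and (iii) --- the chain of inequalities linking $K=r$ to $\k_1,\k_2,l_1,l_2$ through the auxiliary parameters $\e_1,\e_2$ --- is precisely what condition $(iv')$/\eqref{eq:condition4} is arranged to deliver (e.g.\ with $\e_1=\e_2=1$); Theorem~\ref{thm2} then produces the unique solution, and Proposition~\ref{prop:game} gives the equilibrium. In both cases the only real difficulty is this monotonicity bookkeeping: forcing the quadratic form in $(\hat{X},\hat{Y},\W_2(\cdot,\cdot))$ coming out of the It\^{o} computation to be dominated by $-\k(\hat{X}^2+\hat{Y}^2)$ with $\k>r/2$, which is where the precise shapes of \eqref{eq:condition3} and \eqref{eq:condition4} originate and where the propagation of the Lipschitz constant of $\hat{\a}_t$ through $\pa_x f$ and $b_2\hat{\a}_t$ must be tracked with care; the remaining ingredients --- existence and uniqueness for the infinite horizon BSDE and SDE, and the verification of the equilibrium property --- are supplied by the cited results exactly as in Proposition~\ref{prop:game}.
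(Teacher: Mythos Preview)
Your proposal is correct and follows essentially the same approach as the paper, whose proof of Theorem~\ref{thm3.2} consists of the single sentence ``The proof is almost the same as that of Theorem~\ref{thm3.1}.'' You have faithfully carried out what that means: verifying Assumption~\ref{assume1} under $(i)$--$(iv)$ and Assumption~\ref{assume2} under $(i')$--$(iv')$ via the same decomposition and the same lemmas, correctly noting that the absence of the $\pa_{\mu}f$ term in $F_g$ makes the bookkeeping lighter than in the control case.
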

\begin{proof}
The proof is almost the same as that of Theorem~\ref{thm3.1}.
\end{proof}

\begin{remark}
Using PDE tools, \cite{cardaliaguet2019,2021arXiv210109965C} studied the long time behavior of mean fields games in the special case when $b(t,x,\mu,a)=a$, $f(t,x,\mu,a)=L(x,a)+F(x,\mu)$.  Their main assumption, the uniform convexity of $y \mapsto - \inf_a \{ ay+L(x,a)\}$ fails whenever the control space $A$ is bounded. This is a case when Assumption~\ref{assume2} can prove to be less demanding since it holds for large enough $-b_1(t)$ no matter $A$ is bounded or not; see equation \eqref{eq:coeffmfg}.  

\cite{cardaliaguet2019,2021arXiv210109965C} proved that the vanishing discount limit for the infinite horizon problem is the solution to an ergodic mean field games \cite[Theorem 6.4]{2021arXiv210109965C}, and that the solution to the discounted mean field game converges to the unique stationary solution exponentially fast \cite[Theorem 3.7]{cardaliaguet2019}. It remains open to show the above convergence results for general models using FBSDE techniques, and we leave it for future research. 
\end{remark}
 
\section{Linear quadratic models}\label{sec4}

In this section, we apply Theorem~\ref{thm3.1}, \ref{thm3.2} to linear quadratic models. For any $\mu \in \cP_2(\R)$, define $\ol{\mu}:=\int x \, \mu(dx)$ as the mean of distribution $\mu$. Let us suppose $A= \R$, and
\begin{align*}
b(t,x,\mu,a):=& b_1(t)x+\overline{b}_1(t) \overline{\mu}+ b_2(t) a, \\
f(t,x,\mu,a):=& \frac{1}{2} \left(x^2q(t)+(x-\ol{\mu})^2\ol{q}(t)+ a^2 p(t)\right),
\end{align*}
where $b_1(t), \ol{b}_1(t), b_2(t),q(t), \ol{q}(t), p(t)$ are deterministic functions.

In this simple case, we can explicitly compute \eqref{eq:amin}
\begin{align*}
\hat{\a}_t(x,y,\mu)= - \frac{b_2(t)}{p(t)} y.
\end{align*}
Plugging in \eqref{eq:mfc} and \eqref{eq:mfg}, we obtain that 
\begin{align*}
B_c(t,x,y,m)&=B_g(t,x,y,m)=b_1(t)x-\frac{b_2(t)^2}{p(t)}y + \ol{b}_1(t) \ol{\mu}, \\
F_c(t,x,y,m)&= b_1(t)y+(q(t) + \ol{q}(t))x -\ol{q}(t)  \ol{\mu}-ry+\ol{b}_1(t) \ol{\nu}, \\
F_g(t,x,y,m)&=b_1(t)y+(q(t) + \ol{q}(t))x -\ol{q}(t)  \ol{\mu}-ry,
\end{align*}
where $\mu$ and  $\nu$ are the first and second marginals of $m$ respectively. 

Applying Theorem~\ref{thm3.1}, \ref{thm3.2}, we can easily obtain the following two corollaries.

\begin{cor}
Suppose $b_1(t), b_2(t),q(t), \ol{q}(t), p(t)$ are bounded. Let $l, \iota, \eta, \xi$ be some positive constants. Then $\hat{\a}_t$ solves the mean field type control problem under either of the following:
\begin{enumerate}[(i)]
\item 
$|\ol{b}_1(t)| \leq l, \, -b_1(t) \geq l-\frac{r}{2}, \, \xi \geq p(t) \geq 2 \eta,\, q(t) \geq 2\iota, \, \ol{q}(t) \geq 0, \, |\ol{q}(t)| \leq l$ for all $t$, and \eqref{eq:condition1} holds.
\item $|\ol{b}_1(t)| \leq l,  \,  p(t) \geq 2 \eta,\, q(t)\geq 0, \, \ol{q}(t) \geq 0, \, |q(t)|+|\ol{q}(t)| \leq l$ for all $t$, and \eqref{eq:condition2} holds.
\end{enumerate}
\end{cor}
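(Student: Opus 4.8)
The plan is to derive the corollary directly from Theorem~\ref{thm3.1} (which already incorporates Proposition~\ref{prop:control}), by checking that the explicit linear quadratic data verifies, in case $(i)$, hypotheses $(i)$--$(iv)$ of that theorem, and, in case $(ii)$, the primed hypotheses $(i')$--$(iv')$, all with $K=r$. In both cases the drift $b(t,x,\mu,a)=b_1(t)x+\ol{b}_1(t)\ol{\mu}+b_2(t)a$ is exactly of the affine form required there, with $b_0\equiv 0$, and $f(t,x,\mu,a)=\tfrac12\big(x^2q(t)+(x-\ol{\mu})^2\ol{q}(t)+a^2p(t)\big)$ is once continuously differentiable and of at most quadratic growth in $(x,\mu,a)$. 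Since $b_1,b_2,q,\ol{q},p$ are bounded, $b_0\equiv 0\in L^2_{r}(0,\infty,\R)$ and $|f(\cdot,0,\delta_0,a)|^{1/2}=\big(\tfrac12 a^2p(\cdot)\big)^{1/2}\in L^2_{r}(0,\infty,\R)$; together with $|\ol{b}_1(t)|\le l$ (and, in case $(i)$, $-\max_t b_1(t)\ge l-\tfrac r2$) this gives condition $(i)$ (resp. the weaker $(i')$).

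The next step is to verify the convexity and Lipschitz conditions by direct differentiation. One has $\pa_x f(t,x,\mu,a)=q(t)x+\ol{q}(t)(x-\ol{\mu})$, $\pa_a f(t,x,\mu,a)=p(t)a$, and, since only $\tfrac12\ol{q}(t)(x-\ol{\mu})^2$ depends on $\mu$, $\pa_\mu f(t,x,\mu,a)(z)=-\ol{q}(t)(x-\ol{\mu})$ as a constant function of $z$. Writing $\hat{x}=x'-x$, $\hat{a}=a'-a$ and letting $X,X'$ be random variables with laws $\mu,\mu'$, the left-hand side of the convexity inequality in $(ii)$ equals
\begin{align*}
f(t,x',\mu',a')-f(t,x,\mu,a)-\pa_{(x,a)}f(t,x,\mu,a)\cdot(\hat{x},\hat{a})-\E\big[\pa_\mu f(t,x,\mu,a)(X)\cdot(X'-X)\big] \\
=\tfrac12 q(t)\hat{x}^2+\tfrac12 p(t)\hat{a}^2+\tfrac12\ol{q}(t)\big((x'-\ol{\mu}')-(x-\ol{\mu})\big)^2\ \ge\ \iota\,\hat{x}^2+\eta\,\hat{a}^2,
\end{align*}
where the last inequality uses $q(t)\ge 2\iota$, $p(t)\ge 2\eta$ and $\ol{q}(t)\ge 0$; discarding the $\iota\,\hat{x}^2$ term instead yields $(ii')$ under the weaker bounds $q(t)\ge 0$, $p(t)\ge 2\eta$. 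For the Lipschitz requirements, $\pa_x f$ depends only on $(x,\mu)$ and is $|q(t)|$-Lipschitz in $x$ and $|\ol{q}(t)|$-Lipschitz in $\mu$; $\pa_a f=p(t)a$ is constant in $(x,\mu)$ and $p(t)$-Lipschitz in $a$ with $p(t)\le\xi$; and $\pa_\mu f(t,x,\mu,a)(z)=-\ol{q}(t)(x-\ol{\mu})$ is $|\ol{q}(t)|$-Lipschitz in each of $x,\mu,a,z$. Using $|\ol{q}(t)|\le l$ in case $(i)$ (resp. $|q(t)|+|\ol{q}(t)|\le l$ in case $(ii)$), these give condition $(iii)$ (resp. $(iii')$) with $\zeta:=\sup_t p(t)\le\xi$. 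Finally, condition $(iv)$ (resp. $(iv')$) is nothing but the standing hypothesis $A=\R$ together with \eqref{eq:condition1} (resp. \eqref{eq:condition2}).

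Once all hypotheses are in place, Theorem~\ref{thm3.1} applies, so the conclusion of Proposition~\ref{prop:control} holds and $\hat{\a}_t(x,y,\mu)=-\tfrac{b_2(t)}{p(t)}y$ solves~\eqref{controlproblem}; this is the assertion of the corollary. I do not anticipate a real obstacle — the proof is essentially bookkeeping of constants through the explicit formulas. The single point that needs a little care is the $L$-convexity of the mean-field term $\tfrac12\ol{q}(t)(x-\ol{\mu})^2$: one must include the $\pa_\mu$-contribution tested against an independent copy, and the computation above shows that this contributes the non-negative square $\tfrac12\ol{q}(t)\big((x'-\ol{\mu}')-(x-\ol{\mu})\big)^2$, so it leaves the moduli $\iota,\eta$ untouched and only $\ol{q}(t)\ge 0$ is needed.
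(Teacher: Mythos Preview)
Your proposal is correct and is exactly the approach the paper intends: the paper itself merely says ``Applying Theorem~\ref{thm3.1}, \ref{thm3.2}, we can easily obtain the following two corollaries'' without further detail, and your write-up supplies precisely that verification. One harmless slip: $\pa_x f(t,x,\mu,a)=q(t)x+\ol{q}(t)(x-\ol{\mu})$ is $(q(t)+\ol{q}(t))$-Lipschitz in $x$, not $|q(t)|$-Lipschitz as you wrote; this only enters condition $(iii')$, and there the assumption $|q(t)|+|\ol{q}(t)|\le l$ still yields the needed bound, so nothing breaks.
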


\begin{cor}
Suppose $b_1(t), b_2(t),q(t), \ol{q}(t), p(t)$ are bounded. Let $l, \iota, \eta, \xi$ be some positive constants. $\hat{\a}_t$ solves the mean field game  under either of the following two conditions:
\begin{enumerate}[(i)]
\item $|\ol{b}_1(t)| \leq l, \, -b_1(t) \geq l-\frac{r}{2}, \, \xi \geq p(t) \geq 2 \eta,\, q(t) \geq 2 \iota, \,  |\ol{q}(t)| \leq l$ for all $t$, and \eqref{eq:condition3} holds.
\item $|\ol{b}_1(t)| \leq l,  \,  p(t) \geq 2 \eta,\, q(t)+\ol{q}(t) \geq 0, \, |q(t)|+|\ol{q}(t)| \leq l$ for all $t$, and \eqref{eq:condition4} holds.
\end{enumerate}
\end{cor}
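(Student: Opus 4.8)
I would verify, under each of the two hypothesis sets, every condition that Proposition~\ref{prop:game} imposes on the data and on the coefficients $(B_g,F_g)$ displayed in \eqref{eq:coeffmfg}; the argument runs parallel to the proof of Theorem~\ref{thm3.1} and is in fact slightly lighter, because in the game the measure flow is frozen, so $F_g$ carries no term analogous to the functional $\Psi$ of Lemma~\ref{lem:lip}, and the only mean-field dependence of $(B_g,F_g)$ enters through $b_0(t,\mu)$ and through the $\mu$-slots of $\hat{\a}_t$ and $\pa_x f$. Common reductions: $b$ is affine in $(x,a)$ with bounded coefficients and $b_0(t,\cdot)$ is $l$-Lipschitz, $f$ has at most quadratic growth, so Assumption~\ref{assume3}(i)--(ii) follow from (i)/(i'); since $(x-x')(b(t,x,\mu,a)-b(t,x',\mu,a))=b_1(t)(x-x')^2$, Assumption~\ref{assume3}(iii) holds with $\k=-\max_t b_1(t)$, which is $\geq l-r/2$ by (iii) in the first set and $\geq 3l-r/2>l-r/2$ in the second because the right side of \eqref{eq:condition4} is at least $3l-r/2$. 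As $b$ has the form required by Lemma~\ref{lem:propertya}, that lemma gives $\hat{\a}_t$ Lipschitz in $(x,y,\mu)$ with the constants of \eqref{lem4:1}, $\hat{\a}_{\cdot}(0,0,\delta_0)\in L^2_{r}(0,\infty,\R)$ via \eqref{lem4:2} together with boundedness of $b_2$ and (i)/(i'), and, since $A=\R$, the one-sided bound \eqref{lem4:4}; and $\cH(t,x,\mu,a,y)=b(t,x,\mu,a)\,y+f(t,x,\mu,a)-rxy$ is convex in $(x,a)$ because $b\cdot y$ and $-rxy$ are affine there and $f$ is convex in $(x,a)$ by (ii)/(ii'). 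Thus, in both cases, every hypothesis of Proposition~\ref{prop:game} is in place except the structural condition on $(B_g,F_g)$.

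\textbf{First hypothesis set (Assumption~\ref{assume1}).} Joint Lipschitz continuity of $B_g,F_g$ in $(x,y,m)$ (part (i)) is read off \eqref{eq:coeffmfg} using (i), (iii), Lemma~\ref{lem:propertya}, and $\W_2(\mu,\mu')\leq\W_2(m,m')$. For part (ii) with $K=r$, expanding $-r\hat X\hat Y-\hat X(F_g(t,U)-F_g(t,U'))+\hat Y(B_g(t,U)-B_g(t,U'))$ (notation as in Assumption~\ref{assume1}) via \eqref{eq:coeffmfg} cancels the $\pm r\hat X\hat Y$ and $\pm b_1(t)\hat X\hat Y$ contributions and leaves $-\hat X(\pa_x f(t,X,\mu,\hat{\a}_t(X,Y,\mu))-\pa_x f(t,X',\mu',\hat{\a}_t(X',Y',\mu')))+b_2(t)\hat Y(\hat{\a}_t(X,Y,\mu)-\hat{\a}_t(X',Y',\mu'))+\hat Y(b_0(t,\mu)-b_0(t,\mu'))$. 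I would bound the first term by $-2\iota\hat X^2$ plus Lipschitz remainders using $\iota$-convexity of $f$ in $x$ (as in \eqref{thm3.1:eq2}), the second by $-\tfrac{2b_2(t)^2\eta}{\zeta^2}\hat Y^2$ plus remainders using \eqref{lem4:4} and \eqref{lem4:1} (as in \eqref{thm3.1:eq3}), and the last by $l|\hat Y|\W_2(\mu,\mu')$. Taking expectations, bounding $\W_2(\cL(X),\cL(X'))\leq\sqrt{\E[\hat X^2]}$, and applying Young's inequality to each cross term, I obtain a bound of the form $-\k\,\E[\hat X^2+\hat Y^2]$ whose constant $\k$ equals the left side of \eqref{eq:condition3}; condition (iv) then gives $\k\geq r/2$, i.e. Assumption~\ref{assume1}(ii) with $0<r<2\k$, so Proposition~\ref{prop:game} applies.

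\textbf{Second hypothesis set (Assumption~\ref{assume2}).} From \eqref{eq:coeffmfg}, \eqref{lem4:1}, and the Lipschitz property of $\pa_x f$ in (iii'), one obtains $(y-y')(F_g(t,x,y,m)-F_g(t,x,y',m))\leq(b_1(t)-r+\tfrac{|b_2(t)|l}{2\eta})(y-y')^2$ and $(x-x')(B_g(t,x,y,m)-B_g(t,x',y,m))\leq(b_1(t)+\tfrac{|b_2(t)|l}{2\eta})(x-x')^2$, plus mixed Lipschitz bounds giving constants $l_1,l_2$ for $F_g,B_g$ in $(x,m)$, exactly as in the second half of the proof of Theorem~\ref{thm3.1}. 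Setting $\k_1=-\max_t(b_1(t)-r+\tfrac{|b_2(t)|l}{2\eta})$, $\k_2=-\max_t(b_1(t)+\tfrac{|b_2(t)|l}{2\eta})$ and the matching $l_1,l_2$, Assumption~\ref{assume2}(i)--(ii) hold, and (iv) holds since $F_g(\cdot,0,0,\delta_0),B_g(\cdot,0,0,\delta_0)\in L^2_{r}$ by (i') and \eqref{lem4:2}. Condition (iv') is arranged precisely so that $-2\k_1+6l_1<-r<2\k_2-6l_2$; taking $\e_1=\e_2=1$ makes $2\k_1-4l_1-r>2l_1$ and $r+2\k_2-4l_2>2l_2$, so $4l_1l_2<(2\k_1-4l_1-r)(r+2\k_2-4l_2)$, giving Assumption~\ref{assume2}(iii). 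Proposition~\ref{prop:game} then yields the unique solution and the equilibrium property.

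\textbf{Main obstacle.} Nothing above is conceptually hard; the real work is the constant bookkeeping in the two verifications — tracking which Young splittings produce the exact coefficients in \eqref{eq:condition3} and \eqref{eq:condition4} and checking those inequalities are tight enough to deliver $r<2\k$ in the first case and Assumption~\ref{assume2}(iii) in the second. A subtler point is that in the primed regime the strong monotonicity of $B_g$ and $F_g$ comes entirely from $-b_1(t)$ being large, not from convexity of $f$ — which is why (ii') requires only plain convexity of $f$ in $x$ — so one must take care not to invoke $\iota$-convexity there.
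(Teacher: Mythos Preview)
Your proposal is correct and follows essentially the same approach as the paper: the paper's proof of this corollary is simply an appeal to Theorem~\ref{thm3.2} (whose proof in turn is ``almost the same as that of Theorem~\ref{thm3.1}''), and what you have written is nothing other than the verification of the hypotheses of Proposition~\ref{prop:game}/Theorem~\ref{thm3.2} specialized to the linear--quadratic data. The only difference is level of detail: the paper dispatches the corollary in one line, whereas you spell out the constant bookkeeping explicitly.
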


\begin{remark}
It is known that one can solve linear quadratic mean field games by Riccati equations, and thus the solution $Y_t$ is a linear transformation of $X_t$. As in \cite[Section 3.5]{MR3752669}, one may assume that $Y_t=\eta(t) X_t + \chi(t), \, Z_t=\eta(t) \sigma$, and it can be shown that $(\eta(t),\chi(t))$ solves 
\begin{align}\label{section4:eq1}
\begin{cases}
0=\dot{\eta}(t) -\eta(t)^2 \frac{b_2(t)^2}{p(t)}+\eta(t)\left( 2b_1(t)-r\right)+q(t)+\ol{q}(t), \\
0=\dot{\chi}(t)+\chi(t)\left(-\eta(t)\frac{b_2(t)^2}{p(t)}+b_1(t)-r\right)-\ol{q}(t) \ol{x}(t)+\eta(t) \ol{b}_1(t) \ol{x}(t), \quad \forall {t \geq 0},
\end{cases}
\end{align}
where $\ol{x}(t):=\E[X_t]$ together with $\ol{\eta}(t)$ is the solution to
\begin{align}\label{section4:eq2}
\begin{cases}
0=\dot{\ol{\eta}}(t)+\ol{\eta}(t)\left( 2b_1(t)+\ol{b}_1(t) -r\right) -\ol{\eta}(t)^2 \frac{b_2(t)^2}{p(t)}+q(t),  \\
\dot{\ol{x}}(t)=\left(b_1(t)+\ol{b}_1(t) -\ol{\eta}(t) \frac{b_2(t)^2}{p(t)} \right) \ol{x}(t),  \quad \forall t \geq 0,\\
\ol{x}(0)=\E[\xi].
\end{cases}
\end{align}
Both \eqref{section4:eq1} and \eqref{section4:eq2} are systems of infinite horizon ordinary differential equations, and we impose the growth condition $\int_0^{\infty} e^{-rt} \left( \ol{x}(t)^2+\chi(t)^2 \right) dt +\sup_t|\eta(t)| < \infty$, and that $\eta(t) \geq \frac{p(t)}{b_2(t)^2}(b_1(t)-r/2)$. 

When there exists a solution $(\eta(t),\chi(t),\ol{x}(t))$ to \eqref{section4:eq1}\eqref{section4:eq2}, it can be easily verified that $Y_t=\eta_t X_t +\chi_t, \, \E[X_t]=\ol{x}(t)$ solves \eqref{eq:mfg} and that $(X_t,Y_t) \in L^2_{-r}(0,\infty, \R^2)$. Therefore by the uniqueness result of MFG FBSDE \eqref{eq:mfg}, the solution to \eqref{section4:eq1}\eqref{section4:eq2} is also unique. The solvability of \eqref{section4:eq1} and \eqref{section4:eq2} is strongly connected with an equivalent deterministic linear quadratic optimal control problem, which is beyond the scope of this paper and  we refer to \cite[Section 3.5.1]{MR3752669}. Similarly, one can also write down ordinary equations for solutions to infinite horizon linear quadratic mean field control problems.

\end{remark}

\bibliographystyle{siam}
\bibliography{ref.bib}
\end{document}